\theoremstyle{definition}
\newtheorem{defi}{Definition}[section]
\newtheorem{rem}[defi]{Remark}
\newtheorem{ex}[defi]{Example}
\theoremstyle{plain}
\newtheorem{thm}[defi]{Theorem}
\newtheorem{lemma}[defi]{Lemma}
\newtheorem{cor}[defi]{Corollary}
\newtheorem{prop}[defi]{Proposition}
\numberwithin{equation}{section}
\newcommand{\R}{\ensuremath{\mathbbm{R}}}     
\newcommand{\C}{\ensuremath{\mathbbm{C}}}     
\newcommand{\N}{\ensuremath{\mathbbm{N}}}     
\newcommand{\Z}{\ensuremath{\mathbbm{Z}}}     
\newcommand{\Q}{\ensuremath{\mathbbm{Q}}}     
\renewcommand{\H}{\ensuremath{\mathbbm{H}}}   
\newcommand{\J}{\ensuremath{\mathbbm{J}}}     
\renewcommand{\L}{\ensuremath{\mathbbm{L}}}   
\def\sst{\scriptstyle}
\def\lt{\left}
\def\rt{\right}
\newcommand{\Rd}{\ensuremath{\R^d}}                       
\renewcommand{\Re}[1]{\ensuremath{\mathrm{Re}\lt(#1\rt)}} 
\renewcommand{\Im}[1]{\ensuremath{\mathrm{Im}\lt(#1\rt)}} 
\newcommand{\set}[1]{\lt\{#1\rt\}}			                  
\newcommand{\abs}[1]{\lt|#1\rt|}		                      
\newcommand{\gen}[1]{\lt\langle #1\rt\rangle}             
\newcommand{\inn}[2]{\lt\langle #1,#2\rt\rangle}          
\newcommand{\nrq}{{|q|}^2}                                
\DeclareMathOperator{\lcm}{lcm}                           
\DeclareMathOperator{\OC}{OC} 
\DeclareMathOperator{\SOC}{SOC}
\DeclareMathOperator{\SO}{SO}
\DeclareMathOperator{\OG}{O}
\DeclareMathOperator{\AC}{AOC}
\DeclareMathOperator{\Is}{\ensuremath{E}}                 
\newcommand{\e}{\ensuremath{\mathbf{e}}}                  
\newcommand{\ii}{\ensuremath{\mathbf{i}}}                 
\renewcommand{\j}{\ensuremath{\mathbf{j}}}                
\renewcommand{\k}{\ensuremath{\mathbf{k}}}                
\def\imod#1{\allowbreak\mkern10mu({\operator@font mod}\,\,#1)} 
\def\@setcopyright{}                                           
\def\serieslogo@{}
\newcommand{\G}{\ensuremath{\Gamma}}        
\renewcommand{\S}{\ensuremath{\Sigma}}      
\newcommand{\vep}{\ensuremath{\varepsilon}} 
\begin{document}
	\author[M.J.C.~Loquias]{Manuel Joseph C.~Loquias}
	\address{Institute of Mathematics, University of the Philippines Diliman, 1101 Quezon City, Philippines}
	\email[M.J.C.~Loquias]{mjcloquias@math.upd.edu.ph}

	\author[P.~Zeiner]{Peter Zeiner}
	\address{Fakult\"at f\"ur Mathematik, Universit\"at Bielefeld, Postfach 100131, 33501 Germany}
	\email[P.~Zeiner]{pzeiner@math.uni-bielefeld.de}

	\title[The coincidence problem for shifted lattices and crystallographic point packings]{The coincidence problem for shifted lattices and crystallographic point packings}

	\begin{abstract}
	    A coincidence site lattice is a sublattice formed by the intersection of a lattice $\G$ in $\Rd$ with the image of $\G$ under a linear isometry.  
			Such a linear isometry is referred to as a linear coincidence isometry of $\G$.  
			Here, we consider the more general case allowing any affine isometry.  
			Consequently, general results on coincidence isometries of shifted copies of lattices, and of crystallographic point packings are obtained.  
			In particular, we discuss the shifted square lattice and the diamond packing in detail.
	\end{abstract}

	\subjclass[2010]{Primary 52C07; Secondary 11H06, 82D25, 52C23}

	\keywords{coincidence site lattice, grain boundary, crystallographic point packing, multilattice, diamond lattice}

	\date{18 July 2014}

	\maketitle
	
	\section{Introduction and Outline}   

		It was Friedel in 1911 who first recognized the usefulness of coincidence site lattices (CSLs) in describing and classifying grain boundaries of 
		crystals~\cite{F11}.  Since then, CSLs have been an indispensable tool in the study of grain boundaries, twins, and interfaces~\cite{KW49,B70,WB71}. This 
		prompted various authors to examine the CSLs of cubic and hexagonal crystals~\cite{R66,GBW74,G74,GW85}.

		The advent of quasicrystals in 1984 triggered a renewed interest in CSLs.  This is because experimental evidence showed that quasicrystals, like ordinary
		crystals, exhibit multiple grains, twin relationships, and coincidence quasilattices~\cite{W93,WL95}.  This led to a more general and mathematical 
		treatment of the coincidence problem for lattices in~\cite{B97}.  
		
		Various results are now known about the coincidences of lattices and modules in dimensions at most four.  The coincidence problem for certain planar lattices 
		and modules was solved in~\cite{PBR96,B97} using factorization properties of cyclotomic integers.  For lattices and modules in dimensions three and four,
		quaternions have proven to be an appropriate tool~\cite{B97,Z05,BPR07,RL97,Z06,BZ08,BGHZ08,H08,HZ10}.  

		However, the mathematical treatment of the coincidence problem has been mostly restricted to linear coincidence isometries, whereas isometries containing a
		translational part have rarely been treated so far.  Nevertheless, general (affine) isometries are important in crystallography.  Indeed, the situation where one shifts 
		the two component crystals against each other was investigated in \cite{GC72,F85} and references therein.  

		Even though the idea of introducing a shift after applying a linear coincidence isometry has already been dealt with in the physical literature, not much can 
		be found in the mathematical literature where a systematic treatment of the subject is still missing.  Initial steps in this general direction have actually
		been made in the appendix of \cite{PBR96}.  There, the authors considered coincidence isometries about certain points that are not lattice or module points.
		For example, they determined the set of coincidence isometries about the center of a Delauney cell of the square lattice and 
		calculated the corresponding indices.  

		The present work started from the Ph.D. thesis of the first named author~\cite{L10} and extends results from~\cite{LZ10}.  Here, the notion of a CSL is extended 
		to intersections of two lattices that are related by any isometry.  Such intersections are referred to as affine coincidence site lattices (ACSLs), and the 
		isometries that generate these intersections as affine coincidence isometries.  Theorem~\ref{affisom} identifies the affine coincidence isometries of a lattice, while 
		Equation~\eqref{ACSL} gives the resulting intersections.  

		The succeeding discussion covers a related and special case: the coincidence problem for shifted lattices.  That is, after translating the lattice $\G$ by
		some vector $x$, and upon application of a linear isometry $R$ to the shifted lattice $x+\G$, its intersection with $x+\G$ is considered.  Theorem~\ref{OCxG} 
		asserts that the linear coincidence isometries of $x+\G$ are precisely those coincidence isometries $R$ of $\G$ that satisfy $Rx-x\in\G+R\G$.  Moreover, the 
		CSLs of the shifted lattice are merely translates of CSLs of the original lattice.  Hence, no new values of coincidence indices are obtained by shifting the 
		lattice, with some values disappearing or their multiplicity being changed.
		
		Similar to the approach in~\cite{PBR96,B97}, an extensive analysis of the coincidences of a shifted square lattice in Section~\ref{coincshiftsqlatt} is 
		achieved by identifying the square lattice with the ring of Gaussian integers.  The coincidence problem for a shifted square lattice is completely solved when
		the shift comprises an irrational component (Theorem~\ref{irrational}).  For the remaining case, that is, when the shift may be written as a quotient of two 
		Gaussian integers that are relatively prime, one can compute for the set of coincidence rotations of the shifted square lattice using a divisibility
		condition involving the denominator of the shift (Lemma~\ref{divrule}).  In both instances, the set of coincidence rotations of a shifted square lattice forms 
		a group.  An example is given where the set of coincidence isometries of a shifted square lattice is not a group.  
		 
		The latter part of this contribution is concerned with the coincidences of sets of points formed by the union of a lattice with a finite number of shifted
		copies of the lattice.  Such sets are referred to as crystallographic point packings~\cite{CS99,BG13} or multilattices (see~\cite{PZ98} and references therein).  
		This idea should be useful for crystals having multiple atoms per primitive unit cell~\cite{GP82,PV83}.  Theorem~\ref{genthm} gives the solution of the coincidence problem for
		crystallographic point packings.  Simply put, the linear coincidence isometries of a crystallographic point packing are exactly the coincidence isometries of the lattice that 
		generates the crystallographic point packing - only the resulting intersections and corresponding indices may vary.  This paves the way for the solution of the coincidence problem
		for the diamond packing given in Theorem~\ref{OCdiamond}.     
		
	\section{Linear coincidences of lattices}
		
		We start with the basic definitions and some known results on linear coincidence isometries of lattices.  Details can be seen, for instance, in~\cite{B97,BG13}.
 
		A discrete subset $\G$ of $\Rd$ is a \emph{lattice} if it is the \Z-span of $d$~linearly independent vectors
		$v_1,\ldots,v_d\in\Rd$ over $\R$.  The set $\set{v_1,\ldots,v_d}$ is called a \emph{basis} of $\G$, and $\G=\Z v_1\oplus\cdots\oplus \Z v_d$.  As a group,
		$\G$ is isomorphic to the free Abelian group of rank~$d$.  Alternatively, one can characterize a lattice in $\Rd$ as a discrete co-compact subgroup of $\Rd$.
		A subset $\G'$ of $\G$ is a \emph{sublattice} of $\G$ if $\G'$ is a subgroup of $\G$ of finite (group) index.  The index of $\G'$ in $\G$ may be interpreted geometrically -- 
		$[\G:\G']$ is the quotient of the volume of a fundamental domain of $\G'$ by the volume of a fundamental domain of $\G$.

		For a lattice $\G$ in $\Rd$, its \emph{dual lattice} or \emph{reciprocal lattice} $\G^*$ is defined by
		\[\G^{\ast}\vcentcolon=\{x\in\Rd:\inn{x}{y}\in\Z \text{ for all }y\in\G\},\]
		where $\inn{\cdot}{\cdot}$ denotes the standard scalar product in $\Rd$.  Given a sublattice $\G'$ of $\G$, $\G^{\ast}$ is a sublattice of $(\G')^{\ast}$ with 
		$[(\G')^{\ast}:\G^{\ast}]=[\G:\G']$ and $(\G')^{\ast}/\G^{\ast}\cong\G/\G'$ \cite[Lemma 2.3]{B97}.
		
		Two lattices $\G_1$ and $\G_2$ are said to be \emph{commensurate}, denoted $\G_1\sim\G_2$, if $\G_1\cap\G_2$ is a sublattice of both $\G_1$ and $\G_2$.  
		Commensurateness between lattices defines an equivalence relation~\cite[Proposition 2.1]{B97}.  Given two commensurate lattices $\G_1$ and $\G_2$, their
		\emph{sum} ${\G_1+\G_2}\vcentcolon=\set{x_1+x_2:x_1\in\G_1,x_2\in\G_2}$ is also a lattice.  In fact, the following equations hold: 
		${(\G_1\cap\G_2)}^{\ast}=\G_1^{\ast}+\G_2^{\ast}$ and ${(\G_1+\G_2)}^{\ast}=\G_1^{\ast}\cap\G_2^{\ast}$~\cite[Proposition 2.2]{B97}.

		An orthogonal transformation $R\in \OG(d)\vcentcolon=\OG(d,\R)$ is a \emph{linear coincidence isometry} of the lattice $\G$ in $\Rd$ if $\G\sim R\G$.  The 
		sublattice $\G(R)\vcentcolon=\G\cap R\G$ is called the \emph{coincidence site lattice} (CSL) of $\G$ generated by $R$, while the index of $\G(R)$ in $\G$, 
		$\S_{\G}(R)\vcentcolon=[\G:\G(R)]=[R\G:\G(R)]$, is referred to as the \emph{coincidence index of $R$ with respect to $\G$}.  If no confusion arises, we simply write 
		$\S(R)$ to denote the coincidence index of $R$.  Clearly, symmetries in the point group of $\G$, $P(\G)=\set{R\in \OG(d):R\G=\G}$, are precisely those 
		linear coincidence isometries $R$ of $\G$ with $\S(R)=1$.

		The set of linear coincidence isometries of a lattice $\G$ in $\Rd$ is denoted by $\OC(\G)$ while the set of coincidence rotations of $\G$, that is, 
		$\OC(\G)\cap \SO(d)$, is written as $\SOC(\G)$.  Since commensurateness of lattices is an equivalence relation, the set $\OC(\G)$ forms a group having $\SOC(\G)$ 
		as a subgroup~\cite[Theorem 2.1]{B97}.

	\section{Affine coincidences of lattices}\label{affcoinc}
		
		Let $\G$ be a lattice in $\Rd$.  A subset of $\G$ will be called a \emph{cosublattice of $\G$} if it is a coset $\ell+\G'$ of some sublattice $\G'$ of $\G$.  
		The \emph{index of a cosublattice $\ell+\G'$ of $\G$}, denoted by $[\G:\ell+\G']$, is defined as the index of the sublattice $\G'$ in $\G$.  This definition
		of index makes sense geometrically: a translation does not change the volume of the fundamental domains of $\G$ and $\G'$.

		Denote by $\Is(d)$ the group of isometries of $\Rd$.  An element of $\Is(d)$ shall be written as $(v,R)$, where $(v,R):x\mapsto v+R(x)$, with $R\in \OG(d)$ (the 
		linear part of $f$) and $v\in\R^d$ (the translational part of $f$). The definition below generalizes the concept of a linear coincidence isometry to an affine 
		coincidence isometry.

		\begin{defi}
			Let $\G$ be a lattice in $\R^d$ and $(v,R)\in\Is(d)$. Then $(v,R)$ is an \emph{affine coincidence isometry} of $\G$ if $\G\cap (v,R)\G$ contains a 
			cosublattice of $\G$.
		\end{defi}

		The set of affine coincidence isometries of $\G$ shall be denoted by $\AC(\G)$.  It is easy to see that $\AC(\G)$ contains the group 
		\[\OC(\G)=\AC(\G)\cap \OG(d)=\set{(v,R)\in \AC(\G):v=0}.\]

		The following lemma describes the intersection of two lattices that are related by some isometry.
		\begin{lemma}\label{affint}
			Let $\G\subseteq\Rd$ be a lattice and $(v,R)\in\Is(d)$.  If $v\in\ell+R\G$ for some $\ell\in\G$, then $\G\cap (v,R)\G=\ell+(\G\cap R\G)$.
		\end{lemma}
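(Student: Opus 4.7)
The plan is to reduce the intersection $\Gamma \cap (v,R)\Gamma$ to the linear intersection $\Gamma \cap R\Gamma$ by exploiting translation invariance of $\Gamma$ under shifts by its own elements.

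First I would unwind the hypothesis $v \in \ell + R\Gamma$: it means $v = \ell + Rw$ for some $w \in \Gamma$. Substituting this into the image $(v,R)\Gamma = v + R\Gamma$ gives
\[
(v,R)\Gamma = \ell + Rw + R\Gamma = \ell + R(w + \Gamma) = \ell + R\Gamma,
\]
where the last equality uses $w + \Gamma = \Gamma$ since $w \in \Gamma$. So the translational part $v$ can be absorbed, modulo $R\Gamma$, into the lattice translation by $\ell$.

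Next I would compute the intersection. Since $\ell \in \Gamma$, translation by $-\ell$ is a bijection of $\Gamma$ onto itself, so
\[
\Gamma \cap (v,R)\Gamma = \Gamma \cap (\ell + R\Gamma) = \ell + \bigl((\Gamma - \ell) \cap R\Gamma\bigr) = \ell + (\Gamma \cap R\Gamma),
\]
which is exactly the desired identity.

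There is no real obstacle here; the statement is essentially a bookkeeping lemma, and the only subtlety is recognizing that both occurrences of the translate $\ell$ (once coming from the decomposition of $v$, once coming from $\ell \in \Gamma$) can be shuffled in and out of the intersection thanks to the closure of $\Gamma$ under its own translations. The proof will amount to writing down these two lines cleanly.
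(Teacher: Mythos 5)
Your proposal is correct and follows essentially the same route as the paper: both first absorb the translational part $v$ into the lattice vector $\ell$ to get $(v,R)\G=\ell+R\G$, and then use the invariance of $\G$ under translation by $\ell\in\G$ to pull $\ell$ out of the intersection. The only cosmetic difference is that the paper verifies $\G\cap(\ell+R\G)=\ell+(\G\cap R\G)$ by explicit element-chasing, whereas you invoke the general fact that a translation (being a bijection) commutes with intersections.
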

		 
		Lemma~\ref{affint} is easy to see since $\G\cap (\ell,R)\G=\ell+(\G\cap R\G)$.  It brings about the following characterization of an affine coincidence isometry of a lattice.

		\begin{thm}\label{affisom}
			Let $\G$ be a lattice in $\Rd$.  Then $(v,R)\in\Is(d)$ is an affine coincidence isometry of $\G$ if and only if $R\in \OC(\G)$ and $v\in\G+R\G$.
		\end{thm}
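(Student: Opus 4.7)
The plan is to prove both implications by leveraging Lemma~\ref{affint} together with a direct unpacking of the definition of an affine coincidence isometry.

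For the easier direction $(\Leftarrow)$, assume $R\in OC(\G)$ and $v\in\G+R\G$. Write $v=\ell+R\ell'$ with $\ell,\ell'\in\G$, so the hypothesis of Lemma~\ref{affint} is satisfied. The lemma then gives $\G\cap(v,R)\G=\ell+(\G\cap R\G)$. Since $R\in OC(\G)$, the set $\G\cap R\G$ is a sublattice of $\G$, and hence $\ell+(\G\cap R\G)$ is a cosublattice of $\G$ contained in $\G\cap(v,R)\G$. This is exactly what is required for $(v,R)$ to be an affine coincidence isometry.

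For the harder direction $(\Rightarrow)$, suppose $(v,R)$ is an affine coincidence isometry, so there exists a sublattice $\G'$ of $\G$ and some $\ell\in\G$ with $\ell+\G'\subseteq\G\cap(v,R)\G$. The first step is to extract $v\in\G+R\G$: specializing to the element $\ell\in\ell+\G'$ (obtained by taking the $0$ element of $\G'$) gives $\ell=v+R\g_0$ for some $\g_0\in\G$, whence $v=\ell-R\g_0\in\G+R\G$. This is the key observation, and it is forced purely by picking one point of the cosublattice; this is the step I expect to be conceptually central, although it is very short.

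Once $v\in\G+R\G$ is established, Lemma~\ref{affint} applies and yields $\G\cap(v,R)\G=\ell_0+(\G\cap R\G)$ for some $\ell_0\in\G$. The containment $\ell+\G'\subseteq\ell_0+(\G\cap R\G)$ of cosets, evaluated at $0\in\G'$, gives $\ell-\ell_0\in\G\cap R\G$, and subtracting this shift shows $\G'\subseteq\G\cap R\G\subseteq\G$. Since $\G'$ has finite index in $\G$, so does $\G\cap R\G$; as $\G\cap R\G$ has full rank $d$ inside $R\G$ as well, it is also of finite index in $R\G$. Thus $\G\sim R\G$, i.e.\ $R\in OC(\G)$, completing the proof. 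The only mild subtlety is the double-bookkeeping between the coset representative $\ell$ coming from the definition and the representative $\ell_0$ produced by Lemma~\ref{affint}, but both describe the same coset modulo $\G\cap R\G$, so no real obstruction arises.
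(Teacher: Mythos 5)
Your proof is correct and follows essentially the same route as the paper: both directions hinge on Lemma~\ref{affint}, with $v\in\G+R\G$ extracted from a single point of the intersection and commensurateness deduced from the finite-index containment $\G'\subseteq\G\cap R\G$. Your coset bookkeeping with $\ell$ and $\ell_0$ is just a more explicit spelling-out of the index comparison the paper states tersely, so there is no substantive difference.
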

		\begin{proof}
			It follows from Lemma~\ref{affint} that if $R\in \OC(\G)$ and $v\in\G+R\G$ then $\G\cap(v,R)\G$ is a coset of $\G(R)$.  In the other direction, 
			let $(v,R)\in \AC(\G)$.  Since $\G\cap(v,R)\G\neq\varnothing$, one has $v\in\G+R\G$.  Lemma~\ref{affint} then implies that
			$[\G:\G\cap R\G]=[\G:\G\cap (v,R)\G]<\infty$.  This yields $\G\sim R\G$ and $R\in \OC(\G)$.
		\end{proof}

		Therefore, the set of affine coincidence isometries of $\G$ is given by
		\[\AC(\G)=\set{(v,R)\in\Is(d):R\in \OC(\G)\text{ and }v\in\G+R\G}.\]
		Moreover, if $(v,R)\in \AC(\G)$ with $v\in\ell+R\G$ for some $\ell\in\G$, then
		\begin{equation}\label{ACSL}
			\G\cap (v,R)\G=\ell+\G(R)
		\end{equation}
		by Lemma~\ref{affint}. Thus, $\G\cap (v,R)\G$ is a coset of $\G(R)$.  This means that the intersection $\G\cap (v,R)\G$ does not only contain a 
		cosublattice of $\G$ but is in fact a cosublattice of $\G$.  For this reason, we shall refer to $\G\cap (v,R)\G$ as an \emph{affine coincidence site lattice} 
		(ACSL) of $\G$.  In addition, each $R\in \OC(\G)$ corresponds to $\S(R)$ distinct possible ACSLs.

		\begin{rem}
			Another lattice of interest in the study of grain boundaries is the \emph{displacement shift complete} (DSC) \emph{lattice}.  It is the lattice formed by
			all possible displacement vectors that preserve the structure of the grain boundary.  In this setting, given a linear coincidence isometry $R$ of the
			lattice $\G$, the corresponding DSC lattice is $\set{v:(v,R)\in \AC(\G)}=\G+R\G$ by Theorem~\ref{affisom}.  This conclusion is in agreement with the main 
			result of~\cite{G74b}, which states that the DSC lattice generated by $R$ is the dual lattice of the CSL of $\G^{\ast}$ obtained from $R$, that is, 
			${\lt(\G^{\ast}\cap R\G^{\ast}\rt)}^{\ast}=\G+R\G$.
		\end{rem}

		Now, the identity isometry $\mathbbm{1}_d\in \AC(\G)$ for any lattice $\G$ in $\Rd$.  In addition, it follows from Theorem~\ref{affisom} that the inverse of
		every isometry in $\AC(\G)$ is also in $\AC(\G)$.  However, the product of two affine coincidence isometries of $\G$ may or may not be an element of $\AC(\G)$.  
		Thus, the set $\AC(\G)$ does not always form a group.  Actually, $\AC(\G)$ is a group only if it is sufficiently small.
		\begin{prop}\label{group}
			Let $\G\subseteq\Rd$ be a lattice.  Then $\AC(\G)$ is a group if and only if it is the symmetry group $G$ of $\G$.
		\end{prop}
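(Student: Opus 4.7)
The plan is to prove the two directions separately. $(\Leftarrow)$ is immediate: the symmetry group of $\G$ is a group by definition. For $(\Rightarrow)$, I assume $AC(\G)$ is a group and establish both inclusions $G \subseteq AC(\G) \subseteq G$.

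First I would verify that $G \subseteq AC(\G)$ holds unconditionally. Any $(v, R) \in G$ satisfies $R \in P(\G) \subseteq OC(\G)$ and $R\G = \G$, so the condition $v \in \G + R\G = \G$ of Theorem~\ref{affisom} is automatic. The same reduction shows that when $R \in P(\G)$, membership in $G$ and in $AC(\G)$ coincide: the condition $v \in \G + R\G$ collapses to $v \in \G$. Consequently, proving $AC(\G) \subseteq G$ is equivalent to proving $OC(\G) = P(\G)$.

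For a contradiction, suppose some $R \in OC(\G)$ has $\S(R) > 1$, so $R\G \neq \G$. Since $R$ is an isometry, $R\G$ has the same covolume as $\G$, which rules out the proper inclusion $R\G \subsetneq \G$; thus some $c \in \G$ satisfies $Rc \notin \G$. I would then exhibit a failure of closure using this $c$. The elements $(Rc, R)$ and $(0, R^{-1})$ both lie in $AC(\G)$: the first because $Rc \in R\G \subseteq \G + R\G$, the second because $OC(\G)$ is a group. Their composition, however, equals $(Rc, \mathbbm{1}_d)$, and membership of this product in $AC(\G)$ would demand $Rc \in \G + \mathbbm{1}_d\G = \G$, contradicting the choice of $c$. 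This contradicts the hypothesis that $AC(\G)$ is closed under composition, so $OC(\G) = P(\G)$ and hence $AC(\G) = G$.

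The only nontrivial step is the covolume observation guaranteeing $R\G \not\subseteq \G$ whenever $\S(R) > 1$; the rest is routine bookkeeping with Theorem~\ref{affisom} and the group property of $OC(\G)$.
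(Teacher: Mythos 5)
Your proof is correct and uses the same key mechanism as the paper's: composing an element of $AC(\G)$ with $(0,R^{-1})$ to produce a pure translation $(w,\mathbbm{1}_d)$, which Theorem~\ref{affisom} then forces to satisfy $w\in\G$, together with the symmorphic structure of $G$. The only difference is organizational --- you argue by contradiction via an explicit witness $Rc\notin\G$ (justified by the covolume observation), while the paper argues directly that every $(v,R)\in AC(\G)$ has $v\in\G$ and hence $\G+R\G=\G$, so $R\in P(\G)$ --- but this is essentially the same argument.
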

		\begin{proof}
			Suppose $\AC(\G)$ is a group and take $(v,R)\in \AC(\G)$.   It follows from Theorem~\ref{affisom} that the product $(v,R)(0,R^{-1})=(v,\mathbbm{1}_d)\in \AC(\G)$ 
			and so $v\in\G$.  Furthermore, $\G+R\G=\G$ and hence, $R\in P(\G)$.  Since $\G$ is a lattice, its symmetry group $G$ must be symmorphic,
			i.e., it is the semidirect product of $P(\G)$ with its translation subgroup $T(G)$.  Thus, $(v,R)\in G$.  
		\end{proof}
		In particular, $\AC(\G)$ is a group only if $\OC(\G)=P(\G)$, i.e., if $\G$ has no coincidence isometries $R$ with $\S(R)>1$. 

		Note that Proposition~\ref{group} is only true for lattices.  It is difficult in the case of crystallographic point packings. 
		Without going into details here, we mention that $\AC(L)$ is a group only if it is a symmorphic space group. In fact, $P(L)$ has to be a holohedry and 
		$\AC(L)$ turns out to be the symmetry group of some suitable lattice $\varLambda\supseteq L$. Note that $\AC(L)$ may be a proper supergroup of the 
		symmetry group of $L$, where the latter may even be a non-symmorphic space group.

	\section{Linear coincidences of shifted lattices}\label{shiftedlatt}
		
		We now turn our attention to shifted copies $x+\G$ of a lattice $\G$ in $\Rd$ obtained by translating all the points of $\G$ by the vector $x\in\Rd$.  By a 
		\emph{cosublattice of the shifted lattice $x+\G$}, we mean a subset of $x+\G$ of the	form $x+(\ell+\G')$ where $\ell+\G'$ is a cosublattice of $\G$.  In 
		addition, the \emph{index of the cosublattice $x+(\ell+\G')$ in $x+\G$} is understood to be  $[x+\G:x+(\ell+\G')]\vcentcolon=[\G:\G']$.  There is no ambiguity here - 
		relabeling $x$ as the origin gives back the original lattice $\G$ and cosublattice $\ell+\G'$.  Of particular interest in this section are intersections of 
		the form $(x+\G)\cap R(x+\G)$, where $R\in \OG(d)$.

		\begin{defi}
			Let $\G$ be a lattice in $\Rd$ and $x\in\Rd$.  An $R\in \OG(d)$ is said to be a \emph{linear coincidence isometry of the shifted lattice} $x+\G$ if 
			$(x+\G)\cap R(x+\G)$ is a cosublattice of $x+\G$.
		\end{defi}
		
		The intersection $(x+\G)\cap R(x+\G)$ will also be referred to as a \emph{CSL of the shifted lattice $x+\G$}.  The \emph{coincidence index of $R$ with respect 
		to $x+\G$} is taken to be $\S_{x+\G}(R)\vcentcolon={[x+\G:(x+\G)\cap R(x+\G)] }$. The set of all linear coincidence isometries of $x+\G$ shall be denoted by $\OC(x+\G)$.  	
		Likewise, we take $\SOC(x+\G)\vcentcolon=\OC(x+\G)\cap \SO(d)$.		

		\begin{rem}\label{sitesymm}
			Observe that applying a linear isometry $R$ to the shifted lattice $x+\G$ is equivalent to applying the same isometry $R$ but with center at $-x$ to the 
			original lattice~$\G$.  Hence, just as $\OC(\G)$ is an extension of $P(\G)$, one may interpret $\OC(x+\G)$ as a generalization of the stabilizer of the point
			$-x$.
		\end{rem}

		The following theorem characterizes a linear coincidence isometry $R$ of a shifted lattice $x+\G$ and identifies the CSL of $x+\G$ generated by $R$.  The
		result lies on the fact that taking the intersection of $x+\G$ and $R(x+\G)$ corresponds to a shift of the intersection of $\G$ and $(Rx-x,R)\G$ by $x$.  It 
		is a special case of Lemma~\ref{intersectLgen} which will be stated and proved in Section~\ref{sectmultlatt}.
		\begin{thm}\label{OCxG}
			Let $\G$ be a lattice in $\Rd$ and $x\in\Rd$. Then 
			\[\OC(x+\G)=\set{R\in \OC(\G):Rx-x\in\G +R\G}.\]  
			In addition, if $R\in \OC(x+\G)$ with $Rx-x\in\ell+R\G$ for some $\ell\in\G$, then 
			\begin{equation}\label{ACSLshifted}
				(x+\G)\cap R(x+\G)=(x+\ell)+\G(R).
			\end{equation} 			
		\end{thm}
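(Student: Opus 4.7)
The plan is to reduce the shifted-lattice situation to the affine coincidence set-up that was handled in Section~\ref{affcoinc}, exploiting exactly the observation flagged in the paragraph preceding the theorem. The key identity I would establish first is
\[
(x+\G)\cap R(x+\G)=x+\bigl(\G\cap(Rx-x,R)\G\bigr).
\]
This is a one-line manipulation: writing $R(x+\G)=Rx+R\G=x+(Rx-x)+R\G=x+(Rx-x,R)\G$, one factors $x$ out of the intersection. Once this identity is in hand, everything else should follow by invoking results already proved.

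For the first assertion, I would argue that $R\in OC(x+\G)$ iff the right-hand side above is a cosublattice of $x+\G$, which by the definition of cosublattice of a shifted lattice is equivalent to $\G\cap(Rx-x,R)\G$ being a cosublattice of $\G$, i.e., to $(Rx-x,R)\in AC(\G)$. Theorem~\ref{affisom} then immediately translates this into the two conditions $R\in OC(\G)$ and $Rx-x\in\G+R\G$. This gives the claimed description of $OC(x+\G)$.

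For the second assertion, assume $R\in OC(x+\G)$ and $Rx-x\in\ell+R\G$ for some $\ell\in\G$. Applying Lemma~\ref{affint} directly to the isometry $(Rx-x,R)$ gives
\[
\G\cap(Rx-x,R)\G=\ell+(\G\cap R\G)=\ell+\G(R),
\]
and substituting into the key identity yields $(x+\G)\cap R(x+\G)=(x+\ell)+\G(R)$, which is \eqref{ACSLshifted}.

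There is no real obstacle here: the whole content is the translation identity in the first paragraph, after which Theorem~\ref{affisom} and Lemma~\ref{affint} do all the work. The only minor point I would be careful about is bookkeeping around cosublattices of a shifted lattice versus cosublattices of $\G$, making sure that the notion of index is consistent on both sides (it is, by the conventions set up at the start of Section~\ref{shiftedlatt}).
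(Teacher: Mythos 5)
Your proposal is correct and is essentially the paper's own argument: the paper proves this theorem as the special case $x_j=x_k=x$ of Lemma~\ref{intersectLgen}, whose proof consists of exactly your translation identity $(x+\G)\cap R(x+\G)=(x,\mathbbm{1}_d)\bigl[\G\cap(Rx-x,R)\G\bigr]$ followed by Theorem~\ref{affisom} and Equation~\eqref{ACSL} (the latter being just Lemma~\ref{affint} restated). Your bookkeeping remark about cosublattice indices matches the conventions the paper relies on, so there is nothing to fix.
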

		
		Equation~\eqref{ACSLshifted} indicates that the CSL of the shifted lattice $x+\G$ generated by $R\in \OC(x+\G)$ is obtained by translating some coset of 
		$\G(R)$ in $\G$ by $x$.  	Consequently, 
		\begin{equation}\label{coincindCSLshifted}
			\S_{x+\G}(R)=\S_{\G}(R)
		\end{equation}
		for all $R\in \OC(x+\G)$.  This means that shifting a lattice does not yield any new values of coincidence indices. 

		Let $S\in P(\G)$.  If $R\in \OC(\G)$ then $RS\in \OC(\G)$ and the CSLs generated by $R$ and $RS$ are the same, that is, $\G(RS)=\G(R)$.  The corresponding 
		statement for linear coincidence isometries of shifted lattices reads as follows.  It will prove to be useful when counting the number of CSLs of a shifted 
		lattice for a given index.

		\begin{prop}\label{countCSLshift}
			Let $x+\G\subseteq\Rd$ be a shifted lattice, $S\in P(\G)$, and suppose that $R, RS\in \OC(x+\G)$.  Then $(x+\G)\cap RS(x+\G)=(x+\G)\cap R(x+\G)$ if and only 
			if $S\in \OC(x+\G)$.  In particular, if $\OC(x+\G)$ forms a group, then $(x+\G)\cap RS(x+\G)=(x+\G)\cap R(x+\G)$.
		\end{prop}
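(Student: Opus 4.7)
The plan is to apply Theorem~\ref{OCxG} to both $R$ and $RS$ in order to express the two CSLs as explicit cosets of the \emph{same} sublattice, and then reduce the statement to a comparison of translation vectors modulo $\G(R)$.

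First, I would exploit the fact that $S\in P(\G)$ gives $S\G=\G$ and hence $RS\G=R\G$, so that $\G(RS)=\G\cap RS\G=\G(R)$. Choosing $\ell,\ell'\in\G$ with $Rx-x\in\ell+R\G$ and $RSx-x\in\ell'+R\G$, Theorem~\ref{OCxG} then produces
\[(x+\G)\cap R(x+\G)=(x+\ell)+\G(R)\quad\text{and}\quad(x+\G)\cap RS(x+\G)=(x+\ell')+\G(R).\]
These two cosets of $\G(R)$ agree if and only if $\ell'-\ell\in\G(R)$, which is the condition to analyze.

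Next, I would subtract the two congruences defining $\ell$ and $\ell'$ to obtain $R(Sx-x)\equiv\ell'-\ell\pmod{R\G}$. Because $\ell'-\ell$ already sits in $\G$, the membership $\ell'-\ell\in\G(R)=\G\cap R\G$ reduces to $\ell'-\ell\in R\G$, and hence to $R(Sx-x)\in R\G$, i.e., $Sx-x\in\G$. Since $S\in P(\G)\subseteq OC(\G)$ and $\G+S\G=\G$, Theorem~\ref{OCxG} identifies this translational condition with $S\in OC(x+\G)$, closing the equivalence.

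Finally, for the supplementary claim I would note that if $OC(x+\G)$ is a group, then $R^{-1}\in OC(x+\G)$, so $S=R^{-1}(RS)\in OC(x+\G)$, and the equivalence just established forces the two CSLs to coincide. No single step looks truly delicate; the main thing to keep straight is the bookkeeping of translation vectors, together with the observation that $S$ acts as a trivial coincidence of the unshifted lattice $\G$, so that all the nontrivial content concerning $S\in OC(x+\G)$ is carried by the translational condition $Sx-x\in\G$.
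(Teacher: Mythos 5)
Your proposal is correct and follows essentially the same route as the paper's own proof: apply Theorem~\ref{OCxG} and Equation~\eqref{ACSLshifted} to both $R$ and $RS$ (using $S\G=\G$ so that $\G(RS)=\G(R)$), reduce equality of the two cosets to $\ell'-\ell\in R\G$, observe that $R(Sx-x)\in(\ell'-\ell)+R\G$ makes this equivalent to $Sx-x\in\G$, and invoke Theorem~\ref{OCxG} once more to identify that condition with $S\in OC(x+\G)$. The only difference is cosmetic: you spell out the steps $\G(RS)=\G(R)$ and $S=R^{-1}(RS)$ for the group case, which the paper leaves implicit.
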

		\begin{proof}
			It follows from Theorem~\ref{OCxG} that $Rx-x\in\ell_1+R\G$ and $RSx-x\in\ell_2+R\G$ for some $\ell_1,\ell_2\in\G$.  Equation~\eqref{ACSLshifted} yields that
			$(x+\G)\cap RS(x+\G)=(x+\G)\cap R(x+\G)$ if and only if $Sx-x\in\G$.  Applying Theorem~\ref{OCxG} proves the claim.
		\end{proof}

		 Note that for any $S\in P(\G)$, the condition $S\in \OC(x+\G)$ in Proposition~\ref{countCSLshift} is equivalent to saying that $S$ is an element of the 
		 stabilizer of $-x$ (see Remark~\ref{sitesymm}).

		\begin{prop}\label{OCxGpoint}
			Let $\G\subseteq\Rd$ be a lattice and $x\in\Rd$.  If $S\in P(\G)$ then \[\OC(Sx+\G)=S[\OC(x+\G)]S^{-1}.\]
		\end{prop}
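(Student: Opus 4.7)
The plan is to reduce everything to the algebraic characterization given by Theorem~\ref{OCxG}. By that theorem,
\[OC(Sx+\G) = \set{T \in OC(\G) : T(Sx) - Sx \in \G + T\G},\]
and similarly for $OC(x+\G)$. So I just need to show that $T$ lies in the left-hand set if and only if $T = SRS^{-1}$ for some $R$ satisfying $Rx - x \in \G + R\G$.

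First I would record the two facts that $S \in P(\G)$ provides for free: $S\G = \G$, and $S \in OC(\G)$ (with coincidence index $1$). Since $OC(\G)$ is a group, conjugation by $S$ is a bijection of $OC(\G)$ onto itself, so parameterizing $T$ by $T = SRS^{-1}$ with $R \in OC(\G)$ is not a restriction.

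The heart of the argument is a very short calculation. Substituting $T = SRS^{-1}$ gives
\[T(Sx) - Sx \;=\; SRS^{-1}(Sx) - Sx \;=\; S(Rx - x),\]
and using $S\G = \G$ one gets
\[\G + T\G \;=\; S\G + SRS^{-1}(S\G) \;=\; S(\G + R\G).\]
Since $S$ is invertible, $S(Rx - x) \in S(\G + R\G)$ if and only if $Rx - x \in \G + R\G$. Thus the condition $T \in OC(Sx+\G)$ translates precisely into $R \in OC(x+\G)$, and since $R \mapsto SRS^{-1}$ is a bijection this yields the asserted set equality.

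The only point that requires any care is ensuring $S\G = \G$ is applied consistently so that $\G + T\G$ factors as $S(\G + R\G)$; this is where the hypothesis that $S$ lies in the point group (rather than merely in $OC(\G)$) is actually used. Nothing else in the argument is substantive.
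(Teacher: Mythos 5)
Your proof is correct and follows essentially the same route as the paper: both reduce the claim to Theorem~\ref{OCxG} and rest on the computation that $SRS^{-1}(Sx)-Sx = S(Rx-x)$ lies in $\G+SRS^{-1}\G = S(\G+R\G)$ if and only if $Rx-x\in\G+R\G$. The paper compresses this into one sentence, while you additionally make explicit the (correct) point that conjugation by $S\in P(\G)\subseteq OC(\G)$ is a bijection of $OC(\G)$, so the parameterization $T=SRS^{-1}$ loses nothing.
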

		\begin{proof}
			This is a consequence of Theorem~\ref{OCxG} because $SRS^{-1}(Sx)-Sx\in\G+SRS^{-1}\G$ if and only if $Rx-x\in\G+R\G$ for all $R\in \OC(\G)$.
		\end{proof}

		For a given lattice $\G\subseteq\Rd$, it is enough to consider values of $x$ in a fundamental domain of $\G$ to compute for all the different possible sets 
		$\OC(x+\G)$.  Proposition~\ref{OCxGpoint} asserts even more: it suffices to look at values of $x$ in a fundamental domain of the symmetry group of~$\G$.  
		
		Furthermore, the following inclusion property follows immediately from Theorem~\ref{OCxG}.
		\begin{lemma}\label{sumofx}
			If $\G$ is a lattice in $\R^d$ and $x,y\in\Rd$, then for all $a,b\in\Z$, \[\OC(x+\G)\cap \OC(y+\G)\subseteq \OC[(ax+by)+\G].\]
		\end{lemma}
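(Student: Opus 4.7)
The plan is to reduce everything to Theorem~\ref{OCxG}, which characterizes $OC(x+\G)$ as the set of $R\in OC(\G)$ with $Rx-x\in\G+R\G$. Given this, the containment in Lemma~\ref{sumofx} should follow from a one-line linearity computation.

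First I would fix an arbitrary $R\in OC(x+\G)\cap OC(y+\G)$ and invoke Theorem~\ref{OCxG} twice: once to get $R\in OC(\G)$ with $Rx-x\in\G+R\G$, and again to get $Ry-y\in\G+R\G$. The goal is to verify the two conditions of Theorem~\ref{OCxG} for the shifted lattice $(ax+by)+\G$, namely that $R\in OC(\G)$ (which we already have) and that $R(ax+by)-(ax+by)\in\G+R\G$.

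Next I would expand
\[
R(ax+by)-(ax+by)=a(Rx-x)+b(Ry-y),
\]
using only $\R$-linearity of $R$. Since $\G+R\G$ is itself a lattice (in particular, an additive subgroup of $\R^d$ closed under $\Z$-linear combinations), and since $a,b\in\Z$, both summands on the right lie in $\G+R\G$, hence so does their sum. Applying Theorem~\ref{OCxG} one final time gives $R\in OC[(ax+by)+\G]$, which proves the inclusion.

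There is no real obstacle here; the only substantive ingredient is Theorem~\ref{OCxG}, and the rest is the observation that the set $\G+R\G$ on the right-hand side of its characterization is a $\Z$-module, so the map $x\mapsto Rx-x$ interacts well with integer combinations. It is worth noting that the statement does not claim equality, and indeed one cannot expect it: a single $R$ could satisfy $R(ax+by)-(ax+by)\in\G+R\G$ without both $Rx-x$ and $Ry-y$ individually lying in $\G+R\G$ (e.g., take $a=0$ or $b=0$, or take cancellations between the two terms), so the inclusion is generally strict.
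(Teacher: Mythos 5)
Your proof is correct and matches the paper's approach: the paper simply notes that the lemma ``follows immediately from Theorem~\ref{OCxG},'' and the intended argument is exactly your computation $R(ax+by)-(ax+by)=a(Rx-x)+b(Ry-y)\in\G+R\G$, using that $\G+R\G$ is an additive subgroup closed under integer combinations. No discrepancies to report.
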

		
		\begin{cor}
			Let $\G$ be a lattice in $\Rd$ and $x=(1/n)\ell$, where $\ell\in\G$ and $n\in\N$.  If $a\in\Z$ with $a$ and $n$ relatively prime, then
			$\OC(ax+\G)=\OC(x+\G)$. 
		\end{cor}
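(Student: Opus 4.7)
The plan is to get both inclusions directly from Lemma~\ref{sumofx}, using that $\gcd(a,n)=1$ via Bezout's identity, together with the observation that $OC(\ell+\G)=OC(\G)$ whenever $\ell\in\G$ (the ``shift'' by a lattice vector does nothing, as made explicit by Theorem~\ref{OCxG}).

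For the inclusion $OC(x+\G)\subseteq OC(ax+\G)$, I would simply take $y=x$ and the integer pair $(a,0)$ in Lemma~\ref{sumofx}. This yields $OC(x+\G)\subseteq OC(ax+\G)$ with no further work. (Alternatively, one can take $y=0\in\G$ since $OC(\G)\supseteq OC(x+\G)$ and $0+\G=\G$.)

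For the reverse inclusion $OC(ax+\G)\subseteq OC(x+\G)$, the key step is Bezout: since $\gcd(a,n)=1$ there exist $b,c\in\Z$ with $ab+cn=1$, so
\[
x \;=\; b(ax) + c(nx) \;=\; b(ax) + c\ell.
\]
Now apply Lemma~\ref{sumofx} with the two shift vectors $ax$ and $\ell$ and the coefficient pair $(b,c)$:
\[
OC(ax+\G)\cap OC(\ell+\G)\;\subseteq\; OC\bigl(b(ax)+c\ell+\G\bigr) \;=\; OC(x+\G).
\]
Since $\ell\in\G$, we have $\ell+\G=\G$, and by Theorem~\ref{OCxG} every element of $OC(ax+\G)$ already lies in $OC(\G)=OC(\ell+\G)$. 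The intersection on the left therefore collapses to $OC(ax+\G)$, giving the desired inclusion.

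There is no real obstacle here: the statement is essentially a formal consequence of Lemma~\ref{sumofx} once one notices that $x$ and $ax$ generate the same cyclic subgroup of $\frac{1}{n}\G/\G$ exactly when $\gcd(a,n)=1$. The only thing to be careful about is not mixing up the roles of the shift vectors and the integer coefficients when invoking Lemma~\ref{sumofx}, and to verify (trivially) that shifts by lattice vectors do not enlarge $OC$.
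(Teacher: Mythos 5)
Your proof is correct and takes essentially the same route as the paper: both inclusions come from Lemma~\ref{sumofx}, with the reverse inclusion obtained exactly as you do it, via Bezout's identity $ab+nc=1$, the decomposition $b(ax)+c\ell=(ab+nc)\tfrac{1}{n}\ell=x$, and the collapse of the intersection using $OC(ax+\G)\subseteq OC(\G)$. The paper's proof is simply a terser version of your argument, leaving the collapse step implicit.
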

		\begin{proof}
			The inclusion $\OC(x+\G)\subseteq \OC(ax+\G)$ follows directly from Lemma~\ref{sumofx}.  Since $a$ is relatively prime to $n$, there exist integers $b$ and
			$c$ such that $ab+nc=1$.  
			Applying again Lemma~\ref{sumofx} yields \[\OC(ax+\G)\subseteq \OC[(ab+nc)(\tfrac{1}{n}\ell)+\G]=\OC(x+\G).\]
		\end{proof}

		The next proposition compares the sets of linear coincidence isometries of shifts of similar lattices and is the analogue of Lemma 2.5 in~\cite{B97} for
		shifted lattices.

		\begin{prop}\label{OCxGsim}
			Let $\G$ be a lattice in $\Rd$ and $x\in\Rd$.
			\begin{enumerate}[\rm(i)]
				\item If $\lambda\in\R^+$ then $\OC(\lambda x+\lambda\G)=\OC(x+\G)$ with $\S_{\lambda x+\lambda\G}(R)=\S_{\G}(R)$ for all $R\in \OC(\lambda x+\lambda\G)$.
				
				\item If $S\in \OG(d)$ then $\OC(Sx+S\G)=S[\OC(x+\G)]S^{-1}$ with $\S_{Sx+S\G}(R)=\S_{\G}(S^{-1}RS)$ for all $R\in \OC(Sx+S\G)$.
			\end{enumerate}
		\end{prop}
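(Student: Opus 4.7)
The plan is to reduce both parts of the proposition to Theorem~\ref{OCxG} combined with the already-known analogue of Lemma 2.5 of~\cite{B97} for (non-shifted) lattices, namely that $OC(\lambda\G)=OC(\G)$ with $\S_{\lambda\G}(R)=\S_{\G}(R)$, and $OC(S\G)=S[OC(\G)]S^{-1}$ with $\S_{S\G}(R)=\S_{\G}(S^{-1}RS)$. By Theorem~\ref{OCxG}, membership in $OC(y+\Lambda)$ for a shifted lattice $y+\Lambda$ is equivalent to $R\in OC(\Lambda)$ together with $Ry-y\in\Lambda+R\Lambda$. So in each case I just need to translate that membership condition through the scaling (or through $S$) and then invoke \eqref{coincindCSLshifted}.

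For part (i), I apply Theorem~\ref{OCxG} to the shifted lattice $\lambda x+\lambda\G$. The condition $R\in OC(\lambda\G)$ is equivalent to $R\in OC(\G)$, and the membership $R(\lambda x)-\lambda x\in\lambda\G+R\lambda\G$ factors out the scalar $\lambda$ to give $Rx-x\in\G+R\G$. Applying Theorem~\ref{OCxG} in the reverse direction yields $R\in OC(x+\G)$, proving the equality of sets. For the index, equation \eqref{coincindCSLshifted} gives $\S_{\lambda x+\lambda\G}(R)=\S_{\lambda\G}(R)$, and scaling invariance of CSL indices finishes the claim.

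For part (ii), I again apply Theorem~\ref{OCxG} to $Sx+S\G$. The condition $R\in OC(S\G)$ rewrites as $S^{-1}RS\in OC(\G)$. The membership condition $R(Sx)-Sx\in S\G+RS\G$ becomes, after factoring out $S$ on the left, $(S^{-1}RS)x-x\in\G+(S^{-1}RS)\G$. Hence $R\in OC(Sx+S\G)$ if and only if $S^{-1}RS\in OC(x+\G)$, which is exactly the conjugation statement $OC(Sx+S\G)=S[OC(x+\G)]S^{-1}$. For the index, \eqref{coincindCSLshifted} yields $\S_{Sx+S\G}(R)=\S_{S\G}(R)=\S_{\G}(S^{-1}RS)$, the last equality being the orthogonal-transformation case of the lattice analogue.

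There is no real obstacle here; the proof is a matter of carefully pushing the scalar $\lambda$ or the orthogonal transformation $S$ through both the group-theoretic condition $R\in OC(\Lambda)$ and the displacement condition $Ry-y\in\Lambda+R\Lambda$ in Theorem~\ref{OCxG}, and then reading off the index statement from \eqref{coincindCSLshifted}. The only point to watch is that in part~(ii) the conjugation goes in the expected direction, i.e., that it is $S^{-1}RS$ (not $SRS^{-1}$) which must lie in $OC(x+\G)$; this drops out automatically once the factor $S$ is extracted on the left from the condition $R(Sx)-Sx\in S\G+RS\G$.
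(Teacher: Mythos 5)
Your proposal is correct and follows essentially the same route as the paper, whose proof is simply the one-line remark that both statements follow from Theorem~\ref{OCxG} and Equation~\eqref{coincindCSLshifted}; you have merely spelled out the details (pushing $\lambda$ or $S$ through the conditions $R\in OC(\Lambda)$ and $Ry-y\in\Lambda+R\Lambda$, and invoking the unshifted analogue of Lemma 2.5 of~\cite{B97}). The conjugation direction in part~(ii) is handled correctly, so there is nothing to add.
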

		\begin{proof}
			Both statements follow from Theorem~\ref{OCxG} and Equation \eqref{coincindCSLshifted}.
		\end{proof}

		Now, it is evident from Theorem~\ref{OCxG} that $\OC(x+\G)$ is a subset of $\OC(\G)$.  The set ${\OC(x+\G)}$ is certainly nonempty because it contains the
		identity isometry.  It also follows from Theorem~\ref{OCxG} that $\OC(x+\G)$ is closed under inverses, that is, $R^{-1}\in {\OC(x+\G)}$ whenever 
		$R\in \OC(x+\G)$.  However, given $R_1$, $R_2\in \OC(x+\G)$, the product $R_2R_1$ is not necessarily in ${\OC(x+\G)}$.  Thus, one obtains the following result.
		
		\begin{prop}\label{OCgroup}
			For a given lattice $\G\subseteq\Rd$ and $x\in\Rd$, the set $\OC(x+\G)$ is a group if and only if it is closed under composition.
		\end{prop}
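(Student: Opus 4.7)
The plan is to recognize this as a direct application of the standard subgroup criterion within the ambient group $OC(\G)$. The forward direction is immediate from the definition of a group, so the entire content is the reverse implication: closure under composition upgrades $OC(x+\G)$ to a group. For this, I would invoke three facts that have already been assembled in the preceding discussion.

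First, from Theorem~\ref{OCxG} we know $OC(x+\G)\subseteq OC(\G)$, and $OC(\G)$ was shown to be a group in the background material of Section~2. So $OC(x+\G)$ sits as a subset of a group, and it suffices to verify the usual subgroup axioms inside $OC(\G)$. Second, $OC(x+\G)$ is nonempty: the identity isometry $\mathbbm{1}_d$ trivially satisfies $\mathbbm{1}_d x - x = 0 \in \G + \mathbbm{1}_d\G$, so $\mathbbm{1}_d \in OC(x+\G)$ by Theorem~\ref{OCxG}. Third, I would verify closure under inverses by a one-line calculation: if $R\in OC(x+\G)$ then $R\in OC(\G)$, so $R^{-1}\in OC(\G)$ as well, and the condition $R^{-1}x - x \in \G + R^{-1}\G$ follows by applying $R^{-1}$ to $Rx - x \in \G + R\G$ (using that $R^{-1}\G + \G = \G + R^{-1}\G$). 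Hence $R^{-1}\in OC(x+\G)$ again by Theorem~\ref{OCxG}. This matches the remark made just before the proposition.

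With nonemptiness, closure under inverses, and the hypothesis of closure under composition in hand, the subgroup criterion immediately gives that $OC(x+\G)$ is a subgroup of $OC(\G)$, and in particular a group. There is no real obstacle here: the only subtlety worth spelling out carefully is the closure-under-inverses step, since it is the one place where Theorem~\ref{OCxG} does genuine work rather than being a triviality. Everything else is bookkeeping.
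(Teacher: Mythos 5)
Your proof is correct and follows essentially the same route as the paper, which establishes the proposition via the paragraph preceding it: $OC(x+\G)\subseteq OC(\G)$, nonemptiness via the identity, and closure under inverses via Theorem~\ref{OCxG}, leaving composition as the only axiom in question. Your explicit verification of the inverse step (applying $R^{-1}$ to $Rx-x\in\G+R\G$ and using symmetry of lattices under negation) is exactly the computation the paper leaves implicit.
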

		
		 We shall see in Example~\ref{exnotgroup} an instance when $\OC(x+\G)$ fails to form a group.  In any case, the product of two linear coincidence isometries of 
		 $x+\G$ whose coincidence indices are relatively prime turns out to be again a linear coincidence isometry of $x+\G$.  This result is stated in the next 
		 proposition.

		\begin{prop}\label{relprimeshifted}
			Let $\G\subseteq\Rd$ be a lattice and $x\in\Rd$.  If $R_1,R_2\in \OC(x+\G)$ with $\S(R_1)$ and $\S(R_2)$ relatively prime, then $R_2R_1\in \OC(x+\G)$.
		\end{prop}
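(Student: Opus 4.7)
The plan is to use Theorem~\ref{OCxG} to reduce the statement to a condition on the translational parts, and then exploit the coprimality of the indices via a standard sublattice-sum argument. Since $OC(\G)$ is a group and $R_1, R_2 \in OC(x+\G) \subseteq OC(\G)$, the composition $R_2 R_1$ automatically lies in $OC(\G)$. So by Theorem~\ref{OCxG}, the only thing to check is that $R_2 R_1 x - x \in \G + R_2 R_1 \G$.

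The natural first step is the telescoping decomposition
\[
R_2 R_1 x - x \;=\; R_2(R_1 x - x) + (R_2 x - x).
\]
Applying Theorem~\ref{OCxG} to the hypotheses on $R_1$ and $R_2$ individually yields $R_1 x - x \in \G + R_1 \G$ and $R_2 x - x \in \G + R_2 \G$, from which one immediately deduces
\[
R_2 R_1 x - x \;\in\; \G + R_2 \G + R_2 R_1 \G.
\]
So what I really need is the inclusion $R_2 \G \subseteq \G + R_2 R_1 \G$, or equivalently (applying $R_2^{-1}$), $\G \subseteq R_2^{-1} \G + R_1 \G$.

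This is the heart of the proof, and it is where coprimality enters. I plan to use the elementary fact that any two sublattices $\G_1', \G_2' \subseteq \G$ whose indices $[\G:\G_i']$ are coprime must satisfy $\G_1' + \G_2' = \G$, since $[\G : \G_1' + \G_2']$ divides each $[\G : \G_i']$ and hence their gcd. I apply this with $\G_1' := \G \cap R_2^{-1}\G$ and $\G_2' := \G \cap R_1 \G$, which have indices $\S(R_2^{-1}) = \S(R_2)$ and $\S(R_1)$ in $\G$, respectively. By hypothesis these are coprime, so $\G_1' + \G_2' = \G$, and in particular $\G \subseteq R_2^{-1}\G + R_1\G$. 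Applying $R_2$ gives $R_2 \G \subseteq \G + R_2 R_1 \G$, and substituting into the display above yields $R_2 R_1 x - x \in \G + R_2 R_1 \G$.

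I expect the step with any subtlety to be the choice of the two sublattices $\G_1', \G_2'$ and the small index computation $\S(R_2^{-1}) = \S(R_2)$; everything else is a mechanical application of Theorem~\ref{OCxG} and group-theoretic closure. No cases or limits are needed, and no new structural results beyond what has already been developed in the section.
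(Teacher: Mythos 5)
Your proof is correct and takes essentially the same route as the paper: closure of $OC(\G)$, the identical telescoping decomposition $R_2R_1x-x=R_2(R_1x-x)+(R_2x-x)\in\G+R_2\G+R_2R_1\G$, and then coprimality of the indices to absorb $R_2\G$ into $\G+R_2R_1\G$, finishing with Theorem~\ref{OCxG}. The only difference is cosmetic: the paper invokes the key fact in the form $R_2\G=\G(R_2)+R_2\G(R_1)$ with a citation to the literature, which is exactly your identity $\G=(\G\cap R_2^{-1}\G)+(\G\cap R_1\G)$ after applying $R_2$, and your elementary index-divisibility argument (the index of the sum divides both indices, hence their gcd) is a valid self-contained substitute for that citation.
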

		\begin{proof}
			From Theorem~\ref{OCxG}, $R_j\in \OC(\G)$ and $R_jx-x\in\G+R_j\G$ for $j\in\set{1,2}$.  Thus, the product $R_2R_1\in \OC(\G)$.  In addition, 
			$R_2R_1x-x\in\G+R_2R_1\G$ because $\S(R_1)$ and $\S(R_2)$ are relatively prime (see~\cite[Figure 2]{Z10}).  The claim now follows from Theorem~\ref{OCxG}.
		\end{proof}

	\section{Linear coincidences of a shifted square lattice}\label{coincshiftsqlatt}
	
		Let us illustrate our results for the square lattice $\Z^2\simeq \Z[i]$.  The solution of its ordinary coincidence problem is known in detail~\cite{B97,PBR96,LZ10} 
		and we can get very explicit results for its shifted copies as well.  Some of the results have already been published in~\cite{LZ10}, but for sake of completeness we
		will recall them here.

		\subsection{Solution of the coincidence problem for the square lattice}\label{sqlattice}
			
			Let us first summarize the coincidences of the square lattice $\Z^2$ (see~\cite{B97,PBR96,LZ10} for details).  We restrict our discussion to coincidence rotations 
			at the outset and later on extend it to include coincidence reflections.
	
			The group of coincidence rotations of $\Z^2$ is $\SO(2,\Q)$.  To determine the structure of this group, the square lattice is identified with the 
			ring of Gaussian integers \[\G=\Z[i]=\set{m+ni:m,n\in\Z,i^2=-1}\] embedded in $\C$.  It can be shown that every coincidence rotation in $\SOC(\G)$ by an 
			angle of $\theta$ in the counterclockwise direction corresponds to multiplication by the complex number $e^{i\theta}=\vep z/\overline{z}$ on the unit 
			circle, where $\vep\in\set{\pm 1,\pm i}$ is a unit in $\Z[i]$ and $z$ is a Gaussian integer with $z$ relatively prime to $\overline{z}$.  That is, a coincidence 
			rotation $R$ of $\G$ is equivalent to multiplication by the complex number 
			\begin{equation}\label{coincrot}
				\vep\cdot\prod_{p\equiv 1(4)}{\lt(\frac{\omega_p}{\overline{\omega_p}}\rt)}^{n_p},
			\end{equation}
			where $n_p\in\Z$ and only a finite number of $n_p\neq 0$, $p$ runs over all rational primes $p\equiv 1\imod{4}$ (called splitting primes in $\Z[i]$), and 
			$\omega_p$, and its complex conjugate $\overline{\omega_p}$, are the Gaussian prime factors of $p=\omega_p\cdot\overline{\omega_p}$.  Then $z$ reads
			\begin{equation}\label{num}
				z=\prod_{\stackrel{p\equiv 1(4)}{n_p>0}}{\omega_p}^{n_p}\cdot\prod_{\stackrel{p\equiv 1(4)}{n_p<0}}{{\lt(\overline{{\omega_p}}\rt)}^{\,-n_p}},
			\end{equation}
			and the coincidence index of $R$ is equal to the number theoretic norm of $z$, $\S(R)=N(z)\vcentcolon=z\cdot\overline{z}=\abs{z}^2$.  In addition, the CSL obtained 
			from $R$ is the principal ideal $\G(R)=(z)\vcentcolon=z\Z[i]$.  Consequently, the group of coincidence rotations of the square lattice is given by 
			$\SO(2,\Q)\cong C_4\times\Z^{(\aleph_0)}$, where $C_4$ is the cyclic group of order 4 generated by $i$, and $\Z^{(\aleph_0)}$ is the direct sum of 
			countably many infinite cyclic groups each of which is generated by some $\omega_p/\overline{\omega_p}$.
		
			Every coincidence reflection $T$ of $\Z^2$ can be written as $T=R\cdot T_r$, where $R\in \SOC(\G)$ and $T_r$ is the reflection along the real axis 
			(corresponding to complex conjugation).  Hence, $\S(T)=\S(R)$, $\G(T)=\G(R)$, and $\OC(\Z^2)=\OG(2,\Q)=\SOC(\Z^2)\rtimes\gen{T_r}$ (where $\rtimes$ stands for semidirect product).
	
			The coincidence indices and the number of CSLs of $\Z^2$ for a given index $m$ are described by means of a generating function.  If $f_{\Z^2}(m)$ denotes 
			the number of CSLs of $\Z^2$ of index~$m$, then $f_{\Z^2}$ is \emph{multiplicative} (that is, $f_{\Z^2}(1)=1$ and $f_{\Z^2}(mn)=f_{\Z^2}(m)f_{\Z^2}(n)$ 
			whenever $m$ and $n$ are relatively prime).  The generating function for $f_{\Z^2}$ as a Dirichlet series $\Phi_{\Z^2}(s)$ is given by
			\begin{equation}\label{dirsq}
				\begin{aligned}
					\Phi_{\Z^2}(s)&=\sum_{m=1}^{\infty}{\frac{f_{\Z^2}(m)}{m^s}}=\prod_{p\equiv 1(4)}{\frac{1+p^{-s}}{1-p^{-s}}}
					=\frac{1}{1+2^{-s}}\cdot\frac{\zeta_{\Q (i)}(s)}{\zeta(2s)}\\
					&=1+\tfrac{2}{5^s}+\tfrac{2}{13^s}+\tfrac{2}{17^s}+\tfrac{2}{25^s}+\tfrac{2}{29^s}+\tfrac{2}{37^s}+
					\tfrac{2}{41^s}+\tfrac{2}{53^s}+\tfrac{2}{61^s}+\tfrac{4}{65^s}+\tfrac{2}{73^s}+\cdots
				\end{aligned}
			\end{equation}
			where $\zeta_{\Q (i)}(s)$ is the Dedekind zeta function of the quadratic field $\Q(i)$ and $\zeta(s)=\zeta_{\Q}(s)$ is Riemann's zeta function (see 
			\cite{C78,W97}).  As the rightmost pole of $\Phi_{\Z^2}(s)$ is located at $s=1$, we can infer from Delange's theorem (see for instance, 
			\cite[Theorem 5 of Appendix]{BM99}) that the summatory function $\sum_{m\leq N}f_{\Z^2}(m)$ grows asymptotically as $N/\pi$.  In other words, the
			number of CSLs of $\Z^2$ of index at most $N$ is asymptotically given by $N/\pi$.

			The number of coincidence rotations of $\Z^2$ for a given index $m$ is given by $\hat{f}_{\Z^2}(m)=4f_{\Z^2}(m)$ and the Dirichlet series generating function for 
			$\hat{f}_{\Z^2}$ is $4\Phi_{\Z^2}(s)$.
	
			\begin{rem}\label{coincrotsq}
				Observe from the complex number in~\eqref{coincrot} and Equation~\eqref{num} that each coincidence rotation $R$ of $\G=\Z^2$ can be associated to a
				numerator $z$ and unit $\vep$, and this shall be written as $R_{z,\vep}$.  Note however that this correspondence is not unique (see~\cite{LZ10} for details).	
				Similarly, $T_{z,\vep}\in \OC(\G)\setminus \SOC(\G)$ is understood to be the coincidence reflection $T_{z,\vep}=R_{z,\vep}\cdot T_r$.
			\end{rem}

		\subsection{The sets $\SOC(x+\G)$ and $\OC(x+\G)$}
		
			It is well known that $\SOC(\G)$ and $\OC(\G)$ are groups for arbitrary lattices $\G$, but $\SOC(x+\G)$ and $\OC(x+\G)$ cannot be expected to form groups in general. 
			Hence, we first concentrate on determining the structure of $\SOC(x+\G)$ and $\OC(x+\G)$ for $\G=\Z[i]$.  To this end, we start with a criterion for $R\in OC(\G)$ to be a
			coincidence isometry of $x+\G$~\cite{LZ10} (see also~\cite{L10}).
			
			\begin{lemma}\label{RzvepSOC}
				Let $\G=\Z[i]$, $x\in\C$, $R_{z,\vep}\in \SOC(\G)$, and $T_{z,\vep}\in \OC(\G)\setminus \SOC(\G)$. Then 
				\begin{enumerate}[\rm(i)]
					\item $R_{z,\vep}\in \SOC(x+\G)$ if and only if $(\vep z-\overline{z})x\in\G$.
					
					\item $T_{z,\vep}\in \OC(x+\G)$ if and only if $\vep z\overline{x}-\overline{z}x\in\G$.
				\end{enumerate}
			\end{lemma}
			
			It turns out that the set of coincidence rotations of $x+\G$ forms a group (see~\cite[Theorem~3]{LZ10} or~\cite[Theorem 3.20]{L10}).
			\begin{thm}\label{SOCsqgroup}
				If $\G=\Z[i]$ then $\SOC(x+\G)$ is a subgroup of $\SOC(\G)$ for all $x\in\C$.
			\end{thm}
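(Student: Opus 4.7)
The plan is to appeal to Proposition~\ref{OCgroup}: the paragraph preceding the theorem already establishes that $SOC(x+\G)\subseteq SOC(\G)$ contains the identity and is closed under inverses, so it suffices to verify closure under composition. To that end, take $R_1=R_{z_1,\vep_1}$ and $R_2=R_{z_2,\vep_2}$ in $SOC(x+\G)$, and set $\alpha_j:=(\vep_j z_j-\overline{z_j})x$, which lies in $\Z[i]$ for $j=1,2$ by Lemma~\ref{RzvepSOC}(i). The task is to verify the analogous condition for $R_2R_1$, once it has been written in reduced form.

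For the reduced form I would choose a greatest common divisor $d$ of $z_1$ and $\overline{z_2}$ in $\Z[i]$ and write $z_1=da$, $\overline{z_2}=db$, so that $z_2=\overline{d}\,\overline{b}$ and $\overline{z_1}=\overline{d}\,\overline{a}$. Using $\gcd(z_j,\overline{z_j})=1$ for $j=1,2$, a brief check gives $\gcd(d,\overline{d})=1$ and $\gcd(a\overline{b},\overline{a}b)=1$, so the multiplicand $\vep_1\vep_2\,z_1z_2/(\overline{z_1z_2})$ simplifies to $\vep_1\vep_2\,a\overline{b}/(\overline{a}b)$ in lowest terms, i.e.\ $R_2R_1=R_{a\overline{b},\vep_1\vep_2}$. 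By Lemma~\ref{RzvepSOC}(i) applied to this reduced form, closure is equivalent to
\[
\gamma:=(\vep_1\vep_2\,a\overline{b}-\overline{a}b)\,x\in\Z[i].
\]

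The heart of the argument is the following pair of identities, which I would establish by direct expansion, using $bz_1=dab=a\overline{z_2}$ to cancel the unwanted cross terms:
\[
d\gamma=\vep_2\overline{b}\,\alpha_1+\overline{a}\,\alpha_2,\qquad \overline{d}\gamma=b\,\alpha_1+\vep_1 a\,\alpha_2.
\]
Since $\alpha_1,\alpha_2\in\Z[i]$, both $d\gamma$ and $\overline{d}\gamma$ lie in $\Z[i]$. Because $\gcd(d,\overline{d})=1$ in the principal ideal domain $\Z[i]$, a B\'ezout relation $ud+v\overline{d}=1$ with $u,v\in\Z[i]$ yields $\gamma=u(d\gamma)+v(\overline{d}\gamma)\in\Z[i]$. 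One more application of Lemma~\ref{RzvepSOC}(i) gives $R_2R_1\in SOC(x+\G)$, and Proposition~\ref{OCgroup} concludes the proof.

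I expect the only nonroutine step to be spotting the two identities for $d\gamma$ and $\overline{d}\gamma$; in particular, the appearance of the factor $d$ (resp.\ $\overline{d}$) in front of $\gamma$ is what forces the use of the coprimality $\gcd(d,\overline{d})=1$, which in turn is exactly what the reduced-form hypothesis $\gcd(z_j,\overline{z_j})=1$ provides. All other manipulations are standard divisibility bookkeeping in $\Z[i]$.
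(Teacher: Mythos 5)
Your proof is correct and is essentially the paper's own argument: your $d=\gcd(z_1,\overline{z_2})$ is just the conjugate of the paper's $g=\gcd(\overline{z_1},z_2)$, your two identities for $d\gamma$ and $\overline{d}\gamma$ are exactly the paper's two decompositions of $\big(\vep_2\vep_1h_2h_1-\overline{h_2}\overline{h_1}\big)x$ into $\Z[i]$-combinations of $(\vep_jz_j-\overline{z_j})x$, and your B\'ezout step is a trivial rephrasing of the paper's conclusion $\tfrac{1}{g}\G\cap\tfrac{1}{\overline{g}}\G=\G$ from the coprimality of $g$ and $\overline{g}$ in the principal ideal domain $\Z[i]$. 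The only (welcome) addition is your explicit verification that $a\overline{b}$ is coprime to its conjugate, so that Lemma~\ref{RzvepSOC} applies to the reduced form of $R_2R_1$; the paper leaves this implicit.
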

			The core of the proof is to show that the product $R_{z_2,\vep_1}R_{z_1,\vep_2}=R_{h_2h_1,\vep_2\vep_1}$ is again a coincidence rotation of $x+\G$, where 
			$g\vcentcolon=\gcd(\overline{z_1},z_2)$ (up to a factor that is a unit of $\Z[i]$) and $h_1=z_1/\overline{g}$, $h_2=z_2/g$. This is achieved by showing that 
			$\big(\vep_2\vep_1 h_2 h_1-\overline{h_2h_1}\,\big)x\in(1/g)\G\cap (1/\overline{g})\G$.		
			
			However, the situation is more complicated for $\OC(x+\G)$.  Analogous techniques allow us to show that the product of a rotation $R\in \SOC(x+\G)$ and a reflection 
			$T\in \OC(x+\G)$ is again in $\OC(x+\G)$, but they fail for the product of two reflections in $\OC(x+\G)$. Thus we get the following weaker result.
			\begin{lemma}\label{OCsq}
				Let $\G=\Z[i]$ and $x\in\C$.  Then $\OC(x+\G)$ is a subgroup of $\OC(\G)$ if and only if for any coincidence reflections $T_1$, $T_2\in \OC(x+\G)$, 
				the coincidence rotation $T_2T_1\in \SOC(x+\G)$.
			\end{lemma}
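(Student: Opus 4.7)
The forward direction is immediate: if $OC(x+\G)$ is a subgroup of $OC(\G)$, it is in particular closed under composition. Hence for any coincidence reflections $T_1,T_2\in OC(x+\G)$, the product $T_2T_1$ lies in $OC(x+\G)$; being a product of two reflections, it is a rotation, and therefore lies in $OC(x+\G)\cap SO(2)=SOC(x+\G)$.

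For the reverse direction, my plan is to invoke Proposition~\ref{OCgroup} to reduce the problem to verifying that $OC(x+\G)$ is closed under composition, and then to split the argument according to whether each factor is a rotation or a reflection. The rotation--rotation case is handled by Theorem~\ref{SOCsqgroup}, which already asserts that $SOC(x+\G)$ is a group. The reflection--reflection case is precisely the hypothesis. Thus the entire burden lies in the mixed case: given $R\in SOC(x+\G)$ and a coincidence reflection $T\in OC(x+\G)$, show that $RT$ (and $TR$) lies in $OC(x+\G)$. Note also that since $OC(x+\G)$ is always closed under inverses (by Theorem~\ref{OCxG}) and $(TR)^{-1}=R^{-1}T$, it suffices to treat only one of these mixed products.

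For the mixed case, I would use the parametrization of Remark~\ref{coincrotsq}: write $R=R_{w,\mu}$ and $T=T_{z_0,\vep_0}$. By Lemma~\ref{RzvepSOC}, the hypotheses translate into $(\mu w-\overline{w})x\in\G$ and $\vep_0 z_0\overline{x}-\overline{z_0}x\in\G$. Setting $g=\gcd(w,\overline{z_0})$ and writing $w=gw'$, $z_0=\overline{g}z_0'$, a direct computation shows that $RT$ is the reduced reflection $T_{w'z_0',\mu\vep_0}$. Membership $RT\in OC(x+\G)$ is then equivalent, via Lemma~\ref{RzvepSOC}(ii), to
\[
\mu\vep_0\, w'z_0'\,\overline{x}-\overline{w'z_0'}\,x\in\G.
\]
Multiplying the two defining conditions by appropriate Gaussian integer factors and adding, using that $w'$ and $\overline{z_0'}$ are coprime in $\Z[i]$, yields only that $\overline{g}$ times the above quantity lies in $\G$.

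The main obstacle is therefore to strip off the extraneous factor $\overline{g}$. My plan is to invoke the reflection--reflection hypothesis for a judiciously chosen auxiliary pair of reflections in $OC(x+\G)$ (built from $T$ and divisors of $g$) so that the resulting rotation belonging to $SOC(x+\G)$ supplies exactly the extra divisibility relation required to conclude $\mu\vep_0 w'z_0'\overline{x}-\overline{w'z_0'}x\in\G$. With this, $RT\in OC(x+\G)$, which completes the closure check and proves that $OC(x+\G)$ is a subgroup of $OC(\G)$.
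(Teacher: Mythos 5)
You set up the lemma exactly as the paper does: by Proposition~\ref{OCgroup} and Theorem~\ref{SOCsqgroup}, and with the reflection--reflection products handled by hypothesis, everything reduces to the mixed case, and your remark that closure under inverses lets you treat only one of $RT$, $TR$ is correct. The gap is in how you close that mixed case: the obstacle you describe is illusory, and the detour you propose to overcome it does not work. With your notation $R=R_{w,\mu}\in SOC(x+\G)$, $T=T_{z_0,\vep_0}\in OC(x+\G)$, $g=\gcd(w,\overline{z_0})$, $w=gw'$, $z_0=\overline{g}z_0'$, and $Q:=\mu\vep_0\,w'z_0'\,\overline{x}-\overline{w'}\,\overline{z_0'}\,x$, you correctly obtain $\overline{g}\,Q=\mu w'\big(\vep_0 z_0\overline{x}-\overline{z_0}x\big)+\overline{z_0'}\big(\mu w-\overline{w}\big)x\in\G$. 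The missing second divisibility comes for free from conjugation, not from the reflection--reflection hypothesis: since $\G=\Z[i]$ is stable under complex conjugation, conjugating $(\mu w-\overline{w})x\in\G$ yields $(\mu w-\overline{w})\overline{x}\in\G$ (the conjugate equals $-\overline{\mu}\,(\mu w-\overline{w})\overline{x}$, and $\overline{\mu}$ is a unit), whence $g\,Q=\vep_0 z_0'\big(\mu w-\overline{w}\big)\overline{x}+\overline{w'}\big(\vep_0 z_0\overline{x}-\overline{z_0}x\big)\in\G$. Since $g\mid w$, $\overline{g}\mid\overline{w}$, and $w$, $\overline{w}$ are relatively prime, $g$ and $\overline{g}$ are relatively prime, so $Q\in\tfrac{1}{g}\G\cap\tfrac{1}{\overline{g}}\G=\G$ and $RT\in OC(x+\G)$ by Lemma~\ref{RzvepSOC}(ii). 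This is precisely what the paper's terse proof means by ``the same techniques employed in the proof of Theorem~\ref{SOCsqgroup}''; it is also exactly why the reflection--reflection case is genuinely different and needs the hypothesis --- there both defining conditions mix $x$ and $\overline{x}$, conjugation produces nothing new, and one only reaches \eqref{prodrefsq}.

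Your fallback plan --- stripping off $\overline{g}$ by applying the hypothesis to ``auxiliary reflections built from $T$ and divisors of $g$'' --- cannot be carried out as stated. The hypothesis says something only about pairs of reflections already known to lie in $OC(x+\G)$; it provides no mechanism for producing a coincidence reflection of $x+\G$ with a prescribed numerator, and membership of such auxiliary reflections in $OC(x+\G)$ is exactly the kind of statement you are trying to prove (applying it to $RT$ itself would be circular). So without the conjugation step your treatment of the mixed case, and hence of the lemma, is incomplete; with it, the hypothesis is needed only where you already invoke it, namely for products of two reflections.
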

						
			\begin{rem}\label{remprodtworefsq}
				Let $x\in\C$ and $T_j=T_{z_j,\vep_j}\in \OC(x+\G)\setminus \SOC(x+\G)$ for $j\in\set{1,2}$.  Applying the procedure used in the proof of
				Theorem~\ref{SOCsqgroup} to the product $T_2T_1$ only leads to
				\begin{equation}\label{prodrefsq}
					\big(\vep_2\overline{\vep_1}h_2\overline{h_1}-\overline{h_2}h_1\big)x\in\tfrac{1}{\overline{g}}\G,
				\end{equation}
				where $g\vcentcolon=\gcd(z_1,z_2)$ and $z_j=h_jg$ for $j\in\set{1,2}$.  It follows then from Lemma~\ref{RzvepSOC} that if $z_1$ were relatively prime to $z_2$, then 
				$T_2T_1=R_{h_2\overline{h_1},\vep_2\overline{\vep_1}}\in (S)\OC(x+\G)$.  This fact can also be deduced from Proposition~\ref{relprimeshifted}, 
				because if $z_1$ and $z_2$ were relatively prime in $\Z[i]$, then so are $N(z_1)=\S(R_1)$ and $N(z_2)=\S(R_2)$.
			\end{rem}
	
			\begin{prop}\label{OCsqprop}
				Let $\G=\Z[i]$ and $x\in\C$.  If $\OC(x+\G)$ contains a reflection symmetry $T\in P(\G)$ then $\OC(x+\G)=\SOC(x+\G)\rtimes\langle T\rangle$ and is a subgroup 
				of $\OC(\G)$.  Otherwise, the coincidence reflection $T_{z,\vep}\notin \OC(x+\G)$ for all units $\vep$ of $\Z[i]$ whenever $R=R_{z,\vep'}\in \SOC(x+\G)$ for 
				some unit $\vep'$.
			\end{prop}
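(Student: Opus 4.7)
The plan is to handle the two cases separately. For the first, assuming $T \in P(\G) \cap OC(x+\G)$ is a reflection symmetry, I will show $T$ normalizes the coincidence structure and every coincidence reflection of $x+\G$ factors as $S \cdot T$ with $S \in SOC(x+\G)$, which together yield the semidirect product. For the second, I will establish the contrapositive: if both $R := R_{z,\vep'} \in SOC(x+\G)$ and $T := T_{z,\vep} \in OC(x+\G)$ occur for a common $z$, then the composition $R^{-1}T$ is already a reflection symmetry of $\G$ lying in $OC(x+\G)$.

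In the first case, $T\G = \G$ reduces $T \in OC(x+\G)$ via Theorem~\ref{OCxG} to $Tx - x \in \G$, so $T(x+\G) = x + \G$. Consequently $(x+\G) \cap RT(x+\G) = (x+\G) \cap R(x+\G)$ and $(x+\G) \cap TR(x+\G) = T[(x+\G) \cap R(x+\G)]$ for every isometry $R$, so left and right multiplication by $T$ permute $OC(x+\G)$. Any coincidence reflection $R' \in OC(x+\G)$ then yields a coincidence rotation $R'T \in OC(x+\G)$, which lies in $SOC(x+\G)$ by Theorem~\ref{SOCsqgroup}, so $R' \in SOC(x+\G) \cdot T$ and $OC(x+\G) = SOC(x+\G) \cup SOC(x+\G) \cdot T$. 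Closure under composition reduces to $(S_1T)(S_2T) = S_1 (TS_2T^{-1}) \in SOC(x+\G)$ using $T^2 = 1$, valid since $TS_2T^{-1}$ is a rotation in $OC(x+\G)$ by the above permutation. Together with $\langle T\rangle \cap SOC(x+\G) = \{e\}$ this gives the semidirect product decomposition.

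For the second case, a direct computation in the complex-multiplication model gives $R^{-1}T(w) = \vep\overline{\vep'}\,\overline{w}$, so $R^{-1}T = T_{1,\vep\overline{\vep'}} \in P(\G)$. Because this map fixes $\G$, Theorem~\ref{OCxG} reduces $R^{-1}T \in OC(x+\G)$ to $(R^{-1}T)x - x \in \G$, or equivalently (after multiplying by the unit $\vep'$) to $\alpha := \vep\overline{x} - \vep'x \in \G$. Subtracting the two conditions supplied by Lemma~\ref{RzvepSOC} yields $z\alpha \in \G$ immediately. The main obstacle is promoting $z\alpha \in \G$ to $\alpha \in \G$, since $z$ is typically not a unit. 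For this, I multiply through by $\overline{\vep'}$ and set $\eta := \vep\overline{\vep'} \in \{\pm 1, \pm i\}$; writing $x = u + vi$ and enumerating the four choices of $\eta$, the reduced $\alpha\overline{\vep'} = \eta\overline{x} - x$ takes one of the four shapes $c$, $ci$, $c(1-i)$, or $c(1+i)$ with $c \in \R$. Writing $z = p + qi$ with $p, q \in \Z$, the assumption $\gcd(z,\overline{z})=1$ in $\Z[i]$ forces both $\gcd(p,q)=1$ (since no rational prime can divide $z$) and $(1+i) \nmid z$ (since $1-i$ is an associate of $1+i$), the latter making $p$ and $q$ of opposite parities so that $\gcd(p-q,\,p+q)=1$ as well. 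Plugging each of the four shapes of $\alpha$ into $z\alpha \in \G$ and comparing real and imaginary parts forces the scalar $c \in \Z$ in every case, which yields $\alpha \in \G$ and completes the argument.
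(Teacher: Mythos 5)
Your proof is correct, and it takes a genuinely different route from the paper's in both halves. For the first claim, the paper redoes the gcd-based arithmetic of Theorem~\ref{SOCsqgroup} for a product of two coincidence reflections, feeding in $\overline{x}\in\overline{\vep}x+\G$ (obtained from $T=T_{1,\vep}\in OC(x+\G)$ via Lemma~\ref{RzvepSOC}), and then concludes via Lemma~\ref{OCsq}; you instead observe that $T\in P(\G)\cap OC(x+\G)$ forces $T(x+\G)=x+\G$, so that pre- and post-composition by $T$ are involutions of $OC(x+\G)$, which yields the coset decomposition $OC(x+\G)=SOC(x+\G)\cup SOC(x+\G)\cdot T$ and reduces closure to Theorem~\ref{SOCsqgroup}. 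This is cleaner: it makes the normality of $SOC(x+\G)$ transparent and needs no arithmetic in $\Z[i]$ at all. For the second claim, both arguments pivot on the same element $R^{-1}T_{z,\vep}=T_{1,\vep\overline{\vep'}}\in P(\G)$; the paper gets $R^{-1}T_{z,\vep}\in OC(x+\G)$ by citing the rotation--reflection closure property, which is only established implicitly (the proof of Lemma~\ref{OCsq} defers to ``the same techniques'' as Theorem~\ref{SOCsqgroup}), whereas you prove the membership directly by exploiting that $R$ and $T$ share the numerator $z$: subtracting the two criteria of Lemma~\ref{RzvepSOC} gives $z\alpha\in\G$ for $\alpha=\vep\overline{x}-\vep'x$, and the coprimality and opposite parity of $\Re{z}$ and $\Im{z}$ (Remark~\ref{numprop}) upgrade this to $\alpha\in\G$ by a B\'ezout argument in four cases. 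So the paper's version is shorter because it reuses earlier machinery as a black box, while yours is self-contained precisely where the paper leans on shorthand. One cosmetic slip: $R'T\in SOC(x+\G)$ follows from the definition $SOC(x+\G)=OC(x+\G)\cap SO(2)$, not from Theorem~\ref{SOCsqgroup}; that theorem is needed (and correctly used) only in your closure step $S_1\lt(TS_2T^{-1}\rt)\in SOC(x+\G)$.
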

			\begin{proof}
				Because $T\in P(\G)$, $T=T_{1,\vep}$ for some unit $\vep$ of $\Z[i]$.  Thus, $\overline{x}\in \overline{\vep}x+\G$ by Lemma~\ref{RzvepSOC}.  
				Let $T_j=T_{z_j,\vep_j}\in \OC(x+\G)\setminus \SOC(x+\G)$ for $j\in\set{1,2}$.  If $g\vcentcolon=\gcd(z_1,z_2)$ and $z_j=h_jg$ for $j\in\set{1,2}$, then it follows 
				from Lemma~\ref{RzvepSOC} that 
				\[
					g\big(\vep_2\overline{\vep_1}h_2\overline{h_1}-\overline{h_2}h_1\big)\overline{x}
					=\overline{\vep_1h_1}(\vep_2z_2\overline{x}-\overline{z_2}x)-\overline{\vep_1h_2}(\vep_1z_1\overline{x}-\overline{z_1}x)\in\G.
				\]
				Since $\overline{x}\in\overline{\vep}x+\G$, we have $\big(\vep_2\overline{\vep_1}h_2\overline{h_1}-\overline{h_2}h_1\big)x\in(1/g)\G$.  
				This, together with \eqref{prodrefsq}, implies that
				$\big(\vep_2\overline{\vep_1}h_2\overline{h_1}-\overline{h_2}h_1\big)x\in(1/g)\G\cap(1/\overline{g})\G=\G$, 
				and thus $T_2T_1\in {\OC(x+\G)}$.
				From Lemma~\ref{OCsq}, $\OC(x+\G)$ is a subgroup of $\OC(\G)$.
				
				In addition, any coincidence reflection $T'=T_{z',\vep'}$ of $x+\G$ can be written as $T'=R'\cdot T$ where $R'=R_{z',\overline{\vep}\vep'}\in \SOC(x+\G)$. 
				Hence, $\OC(x+\G)$ is the semidirect product of $\SOC(x+\G)$ and $\langle T\rangle$.
					
				Suppose $\OC(x+\G)$ does not contain any reflection symmetry and $T_{z,\vep}\in \OC(x+\G)$ for some unit $\vep$ of $\Z[i]$.   Since $R\in \SOC(x+\G)$, 
				$R^{-1}\cdot T_{z,\vep}\in \OC(x+\G)$ by Lemma~\ref{OCsq}.  This is a contradiction because $R^{-1}\cdot T_{z,\vep}=T_{1,\overline{\vep'}\vep}\in 
				P(\G)$.
			\end{proof}		

		\subsection{Determination of $\SOC(x+\G)$ and $\OC(x+\G)$}
		
			We now turn to the actual computation of $\OC(x+\G)$ for specific values of $x$. We start with the case when $x$ has an irrational component. Here, the sets $\SOC(x+\G)$
			and $\OC(x+\G)$ are small and thus can be determined completely. The results are summarized in the following theorem, which has been announced in~\cite{LZ10} without proof.
	
			\begin{thm}\label{irrational}  
				Let $\G=\Z[i]$ and $x=a+bi$, with $a,b\in\R$.  If $a$ or $b$ is irrational then $\OC(x+\G)$ is a group of at most two elements.  In particular, if
				\begin{enumerate}[\rm(i)]
					\item $a$ is irrational and $b$ is rational then $\OC(x+\G)=\begin{cases}
					\langle T_r\rangle, &\text{if }2b\in\Z\\
					\set{\mathbbm{1}}, &\text{otherwise}.
					\end{cases}$				
				
					\item $a$ is rational and $b$ is irrational then $\OC(x+\G)=\begin{cases}
					\langle T_{1,-1}\rangle, &\text{if }2a\in\Z\\
					\set{\mathbbm{1}}, &\text{otherwise}.
					\end{cases}$
					
					\item both $a$ and $b$ are irrational, and
					\begin{enumerate}[\rm(a)]
						\item $1$, $a$, and $b$ are rationally independent then $\OC(x+\G)=\set{\mathbbm{1}}$.
					
						\item $a=(p_1/q_1)+(p_2/q_2)b$ where $p_j$, $q_j\in\Z$, and $p_j$ is relatively prime to $q_j$ for $j\in\set{1,2}$, with
						\begin{enumerate}[\rm 1.]
							\item $p_2q_2$ even, then \[\OC(x+\G)=\begin{cases}
							\big\langle T_{p_2+q_2i,1}\big\rangle, &\text{if }q_1\mid 2q_2\\
							\{\mathbbm{1}\}, &\text{otherwise}.
							\end{cases}\]
			
							\item $p_2q_2$ odd, then \[\OC(x+\G)=\begin{cases}
							\big\langle T_{(p_2+q_2)/2-(p_2-q_2)i/2, i}\big\rangle, &\text{if }q_1\mid q_2\\
							\{\mathbbm{1}\}, &\text{otherwise}.
							\end{cases}\]
						\end{enumerate}
					\end{enumerate}
				\end{enumerate}
			\end{thm}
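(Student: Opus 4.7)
The plan is to apply Lemma~\ref{RzvepSOC} directly, using the explicit formulas~\eqref{ezmcz}. Writing $x = a + bi$ and $\vep z - \overline{z} = \alpha + \beta i$ with $\alpha, \beta \in \Z$, the rotation condition $R_{z,\vep} \in SOC(x+\G)$ reduces to
\[
\alpha a - \beta b \in \Z, \qquad \alpha b + \beta a \in \Z.
\]
For reflections, setting $w = z\overline{x}$ and noting that $\vep z \overline{x} - \overline{z} x = \vep w - \overline{w}$, the condition $T_{z,\vep} \in OC(x+\G)$ reduces via~\eqref{ezmcz} (now applied to $w$ in place of $z$) to a single real expression in $\Re{z}, \Im{z}, a, b$ lying in $\Z$.

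For rotations in every sub-case, I first argue that $\vep z - \overline{z} = 0$. Rational independence of $\{1, b\}$ over $\Q$ in cases~(i) and~(ii), and of $\{1, a, b\}$ in case~(iii)(a), immediately force $\alpha = \beta = 0$. In case~(iii)(b), substituting $a = p_1/q_1 + (p_2/q_2) b$ and collecting the coefficient of the irrational $b$ yields the homogeneous system $\alpha p_2 - \beta q_2 = 0$, $\alpha q_2 + \beta p_2 = 0$, whose determinant $p_2^2 + q_2^2 > 0$ again forces $\alpha = \beta = 0$. Combining $\vep z - \overline{z} = 0$ with the four cases of~\eqref{ezmcz} and the conventions of Remark~\ref{numprop} (namely $\Re{z}, \Im{z}$ coprime and of different parity; $z = 1$ iff $\Im{z} = 0$; $\Re{z}$ otherwise nonzero) rules out $\vep \in \{-1, \pm i\}$ and leaves only $z = 1$, $\vep = 1$. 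Hence the only coincidence rotation of $x + \G$ is $\mathbbm{1}$.

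The reflection analysis proceeds analogously, with $\Re{w}, \Im{w}$ replacing $a, b$. In cases~(i) and~(ii), rational independence forces $z = 1$, reducing the candidates to the symmetry reflections $T_r = T_{1, 1}$ and $T_{1, -1}$; Lemma~\ref{RzvepSOC}(ii) then places them in $OC(x+\G)$ exactly when $2b \in \Z$ and $2a \in \Z$, respectively. Case~(iii)(a) admits no non-trivial reflection, since none of the four expressions from~\eqref{ezmcz} can be rational when $\{1, a, b\}$ is rationally independent. In case~(iii)(b), substituting $a = p_1/q_1 + (p_2/q_2)b$ writes each candidate as $\rho_0 + \rho_1 b$ with $\rho_0, \rho_1 \in \Q$. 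Vanishing of $\rho_1$ forces $(\Re{z}, \Im{z})$ to be proportional to $(p_2, q_2)$ or $(q_2, -p_2)$ for $\vep = \pm 1$, and to $(p_2 + q_2, q_2 - p_2)$ or $(p_2 - q_2, p_2 + q_2)$ for $\vep = \pm i$. The parity clause of Remark~\ref{numprop} then admits $\vep \in \{\pm 1\}$ exactly when $p_2 q_2$ is even, and $\vep \in \{\pm i\}$ exactly when $p_2 q_2$ is odd (after cancelling the common factor~$2$ from the latter pair). The surviving constraint $\rho_0 \in \Z$ then reads $q_1 \mid 2 q_2$ in sub-case~(i) and $q_1 \mid q_2$ in sub-case~(ii), yielding the stated generators.

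The main obstacle is bookkeeping. Each admissible pattern for $(\Re{z}, \Im{z})$ comes with a sign choice and, in case~(iii)(b), with two nominally distinct unit choices~$\vep$, so one must check that all valid candidates represent the same reflection, i.e., that the complex numbers $\vep z/\overline{z}$ coincide. A short direct computation confirms this: in sub-case~(iii)(b)(i) the pair $T_{q_2 - p_2 i, -1}$ equals $T_{p_2 + q_2 i, 1}$, and in~(iii)(b)(ii) the $\vep = -i$ candidate collapses to the $\vep = i$ one. Combined with the absence of non-trivial rotations, this shows $OC(x+\G)$ has order at most~$2$ with the generators listed in the theorem, completing the proof.
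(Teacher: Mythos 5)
Your proposal is correct, and its computational core is the same as the paper's: both proofs run everything through Lemma~\ref{RzvepSOC} and the explicit formulas~\eqref{ezmcz}, invoke the coprimality/parity conventions of Remark~\ref{numprop}, and then match coefficients of the irrational quantities to pin down $z$ and $\vep$ — the parity dichotomy ($\vep=\pm1$ iff $p_2q_2$ even, $\vep=\pm i$ iff $p_2q_2$ odd) and the divisibility conditions $q_1\mid 2q_2$ and $q_1\mid q_2$ come out identically. Where you genuinely diverge is in how the claim ``$OC(x+\G)$ is a group of at most two elements'' is secured. The paper proves it \emph{before} any case analysis: for rotations, $(\vep z-\overline{z})x\in\G\subseteq\Q(i)$ together with $x\notin\Q(i)$ forces $\vep z-\overline{z}=0$ uniformly in all cases (no determinant argument needed); and for reflections, if $T_1,T_2\in OC(x+\G)$, then the product formula~\eqref{prodrefsq} of Remark~\ref{remprodtworefsq} places $\bigl(\vep_2\overline{\vep_1}h_2\overline{h_1}-\overline{h_2}h_1\bigr)x$ in $\tfrac{1}{\overline{g}}\G\subseteq\Q(i)$, so the coefficient vanishes and $T_2T_1=\mathbbm{1}$, i.e.\ $T_1=T_2$. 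With at most one reflection guaranteed a priori, the paper need only exhibit a single valid pair $(z,\vep)$ per case, and all other candidates automatically represent the same isometry. You instead enumerate every candidate and verify coincidence by hand (e.g.\ $T_{q_2-p_2i,-1}=T_{p_2+q_2i,1}$) — precisely the bookkeeping you flag as the main obstacle, and precisely what the paper's a priori uniqueness argument renders unnecessary; it also delivers the ``group'' assertion without further checks, since $\{\mathbbm{1},T\}$ with $T$ an involution is automatically closed. Both routes are sound, and your explicit coincidence computations are correct. One small slip in wording: in case~(i) the irrational component is $a$ (with $b$ rational), so the rationally independent set doing the work is $\set{1,a}$, not $\set{1,b}$; the argument itself is unaffected.
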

			\begin{proof}
				Suppose either $a$ or $b$ is irrational, that is, $x\notin\Q(i)$.  If $R_{z,\vep}\in \SOC(x+\G)$ then it follows from Lemma~\ref{RzvepSOC} that 
				$\vep z-\overline{z}=0$.  Thus, $\vep z/\overline{z}=1$ which means that $\SOC(x+\G)=\set{\mathbbm{1}}$, where $\mathbbm{1}$ is the identity 
				isometry.
				
				Assume $\OC(x+\G)$ includes two distinct reflections $T_1=T_{h_1g,\vep_1}$ and $T_2=T_{h_2g,\vep_2}$, with $h_1$ and $h_2$ relatively prime.  One obtains 
				from~\eqref{prodrefsq} that $\vep_2\overline{\vep_1}h_2\overline{h_1}-\overline{h_2}{h_1}=0$.  This implies that $T_1=T_2$.  Therefore, either $\OC(x+\G)=\set{\mathbbm{1}}$ 
				or $\OC(x+\G)=\langle T\rangle$ for some coincidence reflection $T$.
				
				Let $T_{z,\vep}\in \OC(\G)\setminus \SOC(\G)$.  If $a$ is irrational and $b$ is rational, then $2\Re{z\overline{x}}$, $\Re{z\overline{x}}\pm\Im{z\overline{x}}\notin\Z$. 
				This means that ${\vep z\overline{x}-\overline{z}x}\in\Z[i]$ if and only if $\vep=z=1$.  If $z=1$, one has $2\Im{\overline{x}}=-2b\in\Z$ and (i) now follows from 
				Lemma~\ref{RzvepSOC}.  The proof of (ii) proceeds analogously.
				
				Suppose now that both $a$ and $b$ are irrational.  From Lemma~\ref{RzvepSOC}, one obtains that a coincidence reflection $T_{z,\vep}\in \OC(x+\G)$ if and only if				
				\begin{equation}\label{refcondsq}
					a=\begin{cases}
					\tfrac{t}{2\Im{z}}+\tfrac{\Re{z}}{\Im{z}}b, &\text{if }\vep=1\\
					\tfrac{t}{2\Re{z}}-\tfrac{\Im{z}}{\Re{z}}b, &\text{if }\vep=-1\\
					\tfrac{t}{\Re{z}+\Im{z}}+\tfrac{\Re{z}-\Im{z}}{\Re{z}+\Im{z}}b, &\text{if }\vep=i\\
					\tfrac{t}{\Re{z}-\Im{z}}-\tfrac{\Re{z}+\Im{z}}{\Re{z}-\Im{z}}b, &\text{if }\vep=-i,
					\end{cases}
				\end{equation}
				for some $t\in\Z$.  In each case, one is able to write $a$ uniquely as $a=c+d\cdot b$ where $c,d\in\Q$.  
				
				Assume that $a=(p_1/q_1)+(p_2/q_2)b$ where $p_j$, $q_j\in\Z$ with $p_j$ and $q_j$ relatively prime for $j\in\set{1,2}$.  If $p_2q_2$ is even 
				then $a$ is expressible in the form \eqref{refcondsq} if and only if $\vep=\pm 1$ and $q_1\mid 2q_2$.  Then, one can simply take $\vep=1$ and 
				$z=p_2+q_2i$ so that $T_{z,\vep}\in \OC(x+\G)$.  The case where $p_2q_2$ is odd is analogous.
			\end{proof}

			\begin{rem}
				Note that for $T_{(p_2+q_2)/2-(p_2-q_2)i/2,i}=R\cdot T_r$ in Theorem~\ref{irrational}, $R$ actually corresponds to multiplication by the 
				complex number $z/\overline{z}$ with $z=p_2+q_2i$.  However, in such a representation, $z$ and $\overline{z}$ are not relatively prime.
			\end{rem}
	
			It only remains to consider the case when both components of $x$ are rational.  Suppose that $x=a+bi\in\Q(i)$ and write $x=p/q$, where $p$,
			$q\in\Z[i]$ with $p$ and $q$ relatively prime.  It turns out that $\SOC(x+\G)$ ultimately depends on the denominator $q$ of $x$.  In particular, we 
			have the following lemma (see~\cite[Lemma 6]{LZ10} or~\cite[Lemma 3.28]{L10}).
			\begin{lemma}\label{divrule} 
				Let $\G=\Z[i]$, $x=p/q$ where $p$, $q\in\Z[i]$ with $p$ and $q$ relatively prime, and $R=R_{z,\vep}\in \SOC(\G)$.  Then
				$R\in \SOC(x+\G)$ if and only if $q$ divides $\vep z-\overline{z}$.  Consequently, $\SOC(x+\G)=\SOC(1/q+\G)$. 
			\end{lemma}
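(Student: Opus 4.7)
The plan is to reduce the statement directly to the criterion already established in Lemma~\ref{RzvepSOC}(i), and then exploit the arithmetic of $\Z[i]$ as a principal ideal domain to extract the divisibility condition.

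First, I would apply Lemma~\ref{RzvepSOC}(i): the rotation $R=R_{z,\vep}$ lies in $SOC(x+\G)$ if and only if $(\vep z - \overline{z})x \in \G$. Substituting $x = \frac{p}{q}$, this becomes the condition $(\vep z - \overline{z})\frac{p}{q} \in \Z[i]$, equivalently $q \mid (\vep z - \overline{z})p$ in $\Z[i]$.

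Next, I would use that $\Z[i]$ is a principal ideal domain together with the hypothesis $\gcd(p,q)=1$. Standard divisibility in a PID then gives $q \mid (\vep z - \overline{z})p \iff q \mid (\vep z - \overline{z})$, which is the desired criterion. This step is routine and is the only arithmetic input; there is no real obstacle, since the coprimality of $p$ and $q$ is built into the hypothesis.

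For the final consequence, I would observe that the divisibility condition $q \mid \vep z - \overline{z}$ depends only on the denominator $q$ and not on the numerator $p$. Since $\gcd(1,q)=1$ as well, the shifted lattice $\tfrac{1}{q}+\G$ satisfies the identical criterion for every $R_{z,\vep}\in SOC(\G)$, yielding $SOC(x+\G) = SOC(\tfrac{1}{q}+\G)$ immediately. The whole argument is essentially a one-step unfolding of Lemma~\ref{RzvepSOC}, so I would keep the proof short and make sure to flag the use of the PID property explicitly, as this is the only nontrivial structural fact invoked.
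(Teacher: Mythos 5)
Your proof is correct and follows exactly the paper's route: both reduce to Lemma~\ref{RzvepSOC}(i), use coprimality of $p$ and $q$ in $\Z[i]$ to pass from $q \mid (\vep z-\overline{z})p$ to $q \mid (\vep z-\overline{z})$, and conclude the final identity from the fact that the criterion depends only on $q$. No gaps; your explicit flagging of the PID/coprimality step is merely a more verbose statement of what the paper compresses into one sentence.
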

			Hence, in the case of $\SOC(x+\G)$, it is sufficient to restrict the discussion to shifts of the form $x=1/q$. As an immediate consequence of the
			divisibility condition set forth in Lemma~\ref{divrule} we obtain the following results.
			\begin{cor}\label{SOCsub}
				If $q_1,q_2\in\Z[i]$ such that $q_1\mid q_2$, then \[\SOC(\tfrac{1}{q_2}+\G)\subseteq \SOC(\tfrac{1}{q_1}+\G).\]
			\end{cor}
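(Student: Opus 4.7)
The plan is to apply Lemma~\ref{divrule} twice, using transitivity of divisibility in $\Z[i]$ as the bridge. Specifically, fix $R = R_{z,\vep} \in SOC(\tfrac{1}{q_2}+\G)$; I want to show $R \in SOC(\tfrac{1}{q_1}+\G)$.

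First, I would note that since $1$ and $q_j$ are trivially coprime for $j \in \{1,2\}$, the shift $\tfrac{1}{q_j}$ is already in the reduced form required by Lemma~\ref{divrule}. Applying the lemma to $x = \tfrac{1}{q_2}$, the membership $R \in SOC(\tfrac{1}{q_2}+\G)$ is equivalent to $q_2 \mid (\vep z - \overline{z})$ in $\Z[i]$. Then, since $q_1 \mid q_2$ by hypothesis, transitivity of divisibility gives $q_1 \mid (\vep z - \overline{z})$. Applying Lemma~\ref{divrule} again, this time to $x = \tfrac{1}{q_1}$, yields $R \in SOC(\tfrac{1}{q_1}+\G)$, which establishes the claimed inclusion.

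There is essentially no obstacle here: the content is packaged into Lemma~\ref{divrule}, which already reduces coincidence rotation membership of a shifted lattice to a purely arithmetic divisibility condition in the principal ideal domain $\Z[i]$. The corollary is then a one-line consequence of the monotonicity of the divisibility relation with respect to the ideals $(q_1) \supseteq (q_2)$. The only mild care required is to observe that the second application of the lemma makes sense, i.e.\ that $\tfrac{1}{q_1}$ is a valid reduced representation, which is immediate.
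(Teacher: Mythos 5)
Your proof is correct and is exactly the argument the paper intends: the corollary is stated there as an immediate consequence of Lemma~\ref{divrule}, namely that membership in $SOC(\tfrac{1}{q_j}+\G)$ is equivalent to $q_j \mid (\vep z-\overline{z})$, combined with transitivity of divisibility in $\Z[i]$.
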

			This implies that the groups $\SOC(x+\G)$ form a lattice in the algebraic sense of a partially ordered set where each pair of elements has a supremum
			and an infimum. It is not difficult to see that the supremum and infimum of $\SOC(1/q_1+\G)$ and $\SOC(1/q_2+\G)$ are given by $\SOC(1/\gcd(q_1,q_2)+\G)$
			and $\SOC(1/\lcm(q_1,q_2) +\G)$, respectively. The latter can be expressed in terms of $\SOC(1/q_1+\G)$ and $\SOC(1/q_2+\G)$.
			\begin{cor}\label{intSOC}
				Suppose $q_1,q_2\in\Z[i]$.  Then
				\[\SOC\big(\tfrac{1}{\lcm(q_1,q_2)}+\G\big)=\SOC(\tfrac{1}{q_1}+\G)\cap \SOC(\tfrac{1}{q_2}+\G).\]				
			\end{cor}
			\begin{proof}
				The forward inclusion follows from Corollary~\ref{SOCsub}.  Suppose that $R=R_{z,\vep}\in \SOC(\G)$ is a coincidence isometry of $1/q_1+\G$ and
				$1/q_2+\G$.  Then, by Lemma~\ref{divrule}, $\lcm(q_1,q_2)$ divides $\vep z-\overline{z}$ and so $R\in \SOC(1/\lcm(q_1,q_2)+\G)$.  
			\end{proof}
			In fact, it is sufficient to consider only shifts of the form $x=1/n$ with $n\in\Z$, as we have the following result.
			\begin{cor}\label{conjSOC}
				If $q\in\Z[i]$ then \[\SOC(\tfrac{1}{q}+\G)=\SOC(\tfrac{1}{\overline{q}}+\G)=\SOC(\tfrac{1}{\lcm{(q,\overline{q})}}+\G).\]
			\end{cor}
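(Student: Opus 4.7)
The plan is to reduce everything to Lemma~\ref{divrule}: a coincidence rotation $R=R_{z,\vep}$ lies in $SOC(\tfrac{1}{q}+\G)$ precisely when $q\mid\vep z-\overline{z}$. The first equality $SOC(\tfrac{1}{q}+\G)=SOC(\tfrac{1}{\overline{q}}+\G)$ will follow if I can show that $\overline{\vep z-\overline{z}}$ differs from $\vep z-\overline{z}$ only by a unit of $\Z[i]$; then conjugating the divisibility relation gives the result up to a harmless unit factor. The second equality then falls out of Corollary~\ref{intSOC} applied to $q_1=q,\ q_2=\overline{q}$, once the first equality has been established.

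The main computational step is the unit-multiple claim. Using the case-by-case description~\eqref{ezmcz} of $\vep z-\overline{z}$, I would check each of the four units $\vep\in\set{\pm 1,\pm i}$: for $\vep=1$ the quantity $2i\,\Im{z}$ is purely imaginary, so its conjugate equals $-1$ times itself; for $\vep=-1$ the quantity $-2\Re{z}$ is real, so it is fixed under conjugation; for $\vep=\pm i$ the factor $1\mp i$ satisfies $\overline{1-i}=i(1-i)$ and $\overline{1+i}=-i(1+i)$, so conjugation again produces a unit multiple. In every case one has $\overline{\vep z-\overline{z}}=u_{\vep}(\vep z-\overline{z})$ for some $u_{\vep}\in\set{\pm 1,\pm i}$.

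With this in hand, conjugating the relation $q\mid\vep z-\overline{z}$ yields $\overline{q}\mid u_{\vep}(\vep z-\overline{z})$, which is equivalent to $\overline{q}\mid\vep z-\overline{z}$ since $u_{\vep}$ is a unit. Applying Lemma~\ref{divrule} in both directions gives $SOC(\tfrac{1}{q}+\G)=SOC(\tfrac{1}{\overline{q}}+\G)$. Finally, substituting into Corollary~\ref{intSOC} gives
\[SOC(\tfrac{1}{q}+\G)=SOC(\tfrac{1}{q}+\G)\cap SOC(\tfrac{1}{\overline{q}}+\G)=SOC\bigl(\tfrac{1}{\lcm(q,\overline{q})}+\G\bigr),\]
completing the proof. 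I do not expect any serious obstacle: the only nontrivial point is the unit-multiple observation for $\vep=\pm i$, and that is a one-line verification using $(1+i)=i(1-i)$.
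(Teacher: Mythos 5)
Your proposal is correct and follows essentially the same route as the paper: reduce to Lemma~\ref{divrule}, observe that $\overline{\vep z-\overline{z}}$ is a unit multiple of $\vep z-\overline{z}$, and then invoke Corollary~\ref{intSOC} for the $\lcm$ equality. The only difference is cosmetic: where you verify the unit-multiple claim case by case from~\eqref{ezmcz}, the paper gets it in one line from the identity $\overline{\vep z-\overline{z}}=-\overline{\vep}\,(\vep z-\overline{z})$, which holds for every unit $\vep$ since $\vep\overline{\vep}=1$.
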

			\begin{proof}
				Note that $q\mid(\vep z-\overline{z})$ if and only if $\overline{q}\mid(\vep z-\overline{z})$.  The equalities then follow from Lemma~\ref{divrule} and Corollary~\ref{intSOC}.
			\end{proof}
	
			If $x$ is of the form $x=1/q$ where $q$ is an odd integer, then we get additional information on the elements of $\SOC(1/q+\G)$ as well as their indices.		
			\begin{prop}\label{oddden}
				Let $\G=\Z[i]$ and $q>1$ be an odd rational integer.  If $R=R_{z,\vep}\in \SOC(1/q+\G)$ then the following holds.
				\begin{enumerate}[\rm(i)]
					\item For all other units $\vep'\neq\vep$, $R_{z,\vep'}\notin \SOC(1/q+\G)$. 
					
					\item The coincidence index $\S(R)$ is not divisible by $q$.
				\end{enumerate}
			\end{prop}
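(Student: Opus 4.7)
The plan is to translate both statements, via Lemma~\ref{divrule}, into divisibility conditions in $\Z[i]$: namely, $R_{z,\vep'}\in SOC(\tfrac{1}{q}+\G)$ precisely when $q\mid\vep' z-\overline{z}$ in $\Z[i]$. I will then exploit three structural features of the setup: $q>1$ is an odd rational integer (so $\overline{q}=q$ and $1+i$ does not divide $q$ in $\Z[i]$); the numerator $z$ is built via~\eqref{num} only from splitting primes of $\Z[i]$; and $\gcd(z,\overline{z})=1$.

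For (i), I suppose both $R_{z,\vep}$ and $R_{z,\vep'}$ lie in $SOC(\tfrac{1}{q}+\G)$ and subtract the two divisibility relations to obtain $q\mid(\vep-\vep')z$. For any two distinct units $\vep,\vep'\in\set{\pm1,\pm i}$, the difference $\vep-\vep'$ has norm $2$ or $4$, so it factors in $\Z[i]$ as a unit times a power of the ramified prime $1+i$; since $q$ is odd, $\gcd(q,\vep-\vep')=1$ in $\Z[i]$, forcing $q\mid z$. Then $q=\overline{q}$ also divides $\overline{z}$, hence $q\mid\gcd(z,\overline{z})=1$, which contradicts $q>1$; therefore $\vep=\vep'$.

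For (ii), I argue by contradiction: assume some rational prime $p$ divides both $q$ and $\S(R)=N(z)=z\overline{z}$. If $p\equiv 3\imod{4}$, then $p$ is inert in $\Z[i]$, but $z\overline{z}$ has only splitting-prime factors, immediate contradiction. If $p\equiv 1\imod{4}$, write $p=\omega\overline{\omega}$; from $p\mid z\overline{z}$ and after relabelling $\omega\leftrightarrow\overline{\omega}$ if necessary, I may take $\omega\mid z$. Combining $\omega\mid p\mid q$ with $q\mid\vep z-\overline{z}$ yields $\omega\mid\overline{z}$, contradicting $\gcd(z,\overline{z})=1$.

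No step is a serious obstacle; the proof is essentially a prime-by-prime analysis in $\Z[i]$. If anything is the crux, it is recognizing how the single hypothesis ``$q$ odd'' rules out exactly what is needed: it eliminates the ramified prime $1+i$ in (i), and it forces a contradiction with the splitting-prime structure of $z$ in (ii).
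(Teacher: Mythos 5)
Your proof is correct. Part (i) is essentially the paper's own argument: both proofs subtract the two divisibility relations from Lemma~\ref{divrule} to get $q\mid(\vep-\vep')z$, use the oddness of $q$ to discard the factors of $1+i$ hiding in $\vep-\vep'$ and conclude $q\mid z$, and then contradict the primitivity of $z$ --- the paper via the coprimality of $\Re{z}$ and $\Im{z}$ (Remark~\ref{numprop}(i)), you via the coprimality of $z$ and $\overline{z}$ (Remark~\ref{coincrotsq}); these are the same point in substance. Part (ii) is where you genuinely diverge. The paper multiplies the relation $q\mid(\vep z-\overline{z})$ by $z$: since $q\mid z(\vep z-\overline{z})=\vep z^2-z\overline{z}$ and, by assumption, $q\mid z\overline{z}$, it follows at once that $q\mid z^2$, which again contradicts Remark~\ref{numprop}(i) --- no prime factorization enters at all. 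You instead run a prime-by-prime analysis over the rational primes dividing $q$, invoking the splitting behaviour in $\Z[i]$ and the structure~\eqref{num} of $z$: inert primes cannot divide $N(z)$ at all, and a split prime dividing both $q$ and $N(z)$ would force a common Gaussian prime factor of $z$ and $\overline{z}$. Both routes are sound and short; the paper's manipulation is slicker and needs only the primitivity of $z$, while yours makes explicit exactly which primes could obstruct and why each class of primes fails to do so, which is arguably more illuminating about the role of the hypothesis that $q$ is odd.
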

			\begin{proof}
				By Lemma~\ref{divrule}, $q\mid (\vep z-\overline{z})$.
				\begin{enumerate}[\rm(i)]
					\item Assume to the contrary that $R_{z,\vep'}\in \SOC(1/q+\G)$ for some unit $\vep'\neq\vep$ of $\Z[i]$.   Then $q\mid (\vep'z-\overline{z})$
					from Lemma~\ref{divrule} which implies that $q$ divides $z$.  However, $q$ is a rational integer, and so $q$ divides both real and imaginary parts of
					$z$.  This is impossible by the choice of $z$.
					
					\item Suppose $q$ divides $\S(R)=N(z)=z\overline{z}$. Since $q$ divides $z(\vep z-\overline{z})$, the rational integer $q$ also divides $z^2$ which yields
					a contradiction.
				\end{enumerate}
			\end{proof}
	
			It is a well-known fact that $\Z[i]$ is a Euclidean domain. That is, for any $a,b\in\Z[i]$ with $b\neq 0$, there exist $k,r\in\Z[i]$ such that $a=kb+r$ and
			$N(r)\leq(1/2)N(b)$ (see for instance,~\cite[Theorem 215]{HW08}).  The next proposition makes use of this fact.
	
			\begin{prop}\label{euclalg}
				Let $q>1$ be an odd rational integer and write $z=kq+r$ where $k, r\in\Z[i]$ and $N(r)<(1/2)N(q)$.  Then $R=R_{z,\vep}\in \SOC(1/q+\G)$ if 
				and only if $\overline{r}=\vep r$, that is, when $r$ and $\overline{r}$ are associates in $\Z[i]$.
			\end{prop}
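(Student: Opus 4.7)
The plan is to reduce the divisibility condition from Lemma~\ref{divrule} to a condition on the remainder $r$, and then exploit the fact that $q$ is an odd rational integer together with the norm bound $N(r)<\tfrac12 N(q)$ to force the remainder term to vanish.

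First, by Lemma~\ref{divrule}, $R=R_{z,\vep}\in SOC(\tfrac{1}{q}+\G)$ if and only if $q\mid(\vep z-\overline{z})$. Writing $z=kq+r$ and using $\overline{z}=\overline{k}\,q+\overline{r}$, I get
\[
\vep z-\overline{z}=(\vep k-\overline{k})q+(\vep r-\overline{r}),
\]
so the condition becomes $q\mid(\vep r-\overline{r})$. The backward direction is immediate: if $\vep r=\overline{r}$, then $\vep r-\overline{r}=0$ is certainly divisible by $q$.

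For the forward direction, suppose $q\mid(\vep r-\overline{r})$; I want to conclude $\vep r=\overline{r}$. My approach is to write $\vep r-\overline{r}=sq$ with $s\in\Z[i]$ and rule out $s\neq 0$ by a norm count. By the triangle inequality,
\[
N(\vep r-\overline{r})\leq\bigl(|\vep r|+|\overline{r}|\bigr)^{2}=4N(r)<2N(q),
\]
and on the other hand $N(\vep r-\overline{r})=N(s)N(q)$. Hence $N(s)<2$, so either $s=0$ (which gives $\vep r=\overline{r}$, as desired), or $N(s)=1$ and $N(\vep r-\overline{r})=N(q)=q^{2}$. The remaining task is to exclude this last case.

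To handle it, apply the explicit formula~\eqref{ezmcz} (with $r$ in place of $z$): depending on $\vep\in\{1,-1,i,-i\}$, the quantity $\vep r-\overline{r}$ equals $2i\,\Im{r}$, $-2\Re{r}$, $-(\Re{r}+\Im{r})(1-i)$, or $-(\Re{r}-\Im{r})(1+i)$, so its norm is $4(\Im{r})^{2}$, $4(\Re{r})^{2}$, $2(\Re{r}+\Im{r})^{2}$, or $2(\Re{r}-\Im{r})^{2}$, respectively. Setting any of these equal to $q^{2}$ forces $q^{2}=4m^{2}$ or $q^{2}=2m^{2}$ for some $m\in\Z$, contradicting that $q$ is odd. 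Therefore $s=0$, and $\vep r=\overline{r}$, i.e., $r$ and $\overline{r}$ are associates in $\Z[i]$.

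The main (minor) subtlety will be ensuring that the strict inequality $N(r)<\tfrac12 N(q)$ is really used — if only the weak inequality $N(r)\leq\tfrac12 N(q)$ held, the norm count would only give $N(s)\leq 2$, and one would need the parity argument above to eliminate both $N(s)=1$ and $N(s)=2$; as written, the parity step only needs to eliminate $N(s)=1$.
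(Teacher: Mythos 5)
Your proof is correct and takes essentially the same route as the paper's: both reduce the condition to $q\mid(\vep r-\overline{r})$ via Lemma~\ref{divrule}, invoke the bound $N(\vep r-\overline{r})\leq 4N(r)<2N(q)$, and play the oddness of $q$ against the factor of $2$ (equivalently, of $1-i$) inherent in $\vep r-\overline{r}$. The only cosmetic difference is in the last step: the paper deduces $(1-i)q\mid(\vep r-\overline{r})$ and hence $2N(q)\mid N(\vep r-\overline{r})$, whereas you factor the norm as $N(s)N(q)$ and rule out $N(s)=1$ by parity of the explicit forms in~\eqref{ezmcz} --- the same idea in different packaging.
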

			\begin{proof}
				Note that $\vep z-\overline{z}=\big(\vep k-\overline{k}\,\big)q+(\vep r-\overline{r})$.  
								
				Suppose $R\in \SOC(1/q+\G)$.  Then by Lemma~\ref{divrule}, $q\mid(\vep r-\overline{r})$.  Since $q$ is odd, $(1-i)q$ still divides $\vep r-\overline{r}$.  Thus, 
				$N((1-i)q)=2N(q)\mid N(\vep r-\overline{r})$.  However, $N(\vep r-\overline{r})\leq 4N(r)<2 N(q)$.  Therefore, $N(\vep r-\overline{r})=0$ and so $\overline{r}=\vep r$.
	
				The converse follows immediately by Lemma~\ref{divrule}.
			\end{proof}
			
			It follows from the prime factorization of Gaussian integers (see for instance, \cite{HW08}) that $r$ and $\overline{r}$ are associates in $\Z[i]$ if and only if $r$ is a
			rational integer multiple of 1, $i$, $1+i$, or $1-i$.  Hence,  for all odd rational integers $q>1$,
			\begin{multline*}
				\SOC\lt(\tfrac{1}{q}+\G\rt)=\set{R_{z,1}\in \SOC(\G):z=kq+r; k\in\Z[i], r\in\Z ,0<r<\tfrac{1}{2}q}
				\cup\\\{R_{z,i}\in \SOC(\G):z=kq+(1+i)r; k\in\Z[i],
				r\in\Z, 0<r<\tfrac{1}{2}q\}.
			\end{multline*}			
			Let $\mathcal{V}$ be the set of \emph{visible} (or \emph{primitive}) points of $\Z[i]$, that is, \[\mathcal{V}=\set{z\in\Z[i]:\gcd(\Re{z},\Im{z})=1}\] and
			let $\mathcal{V}'=\set{z\in \mathcal{V}: (1+i)\,\nmid\, z}$ be the set of visible points that are not divisible by $1+i$. 
			If $f_{1/q+\G}(m)$ denotes the number of CSLs for a given index $m$ of the shifted lattice $1/q+\G$, then the Dirichlet series generating
			function for $f_{1/q+\G}(m)$ is given by
			\begin{alignat}{2}
				\Phi_{1/q+\G}(s)&=\sum_{m=1}^{\infty}\dfrac{f_{1/q+\G}(m)}{m^s}\notag\\
				&=\sum_{\underset{\sst\gcd(r,q)=1}{0<r<\frac{q}{2}}}
				\bigg(\sum_{\underset{\sst kq+r\in\mathcal{V}'}{k\in\Z[i]}}\frac{1}{N(kq+r)^s}+
				\sum_{\underset{\sst kq+(1+i)r\in\mathcal{V}'}{k\in\Z[i]}}\frac{1}{N(kq+(1+i)r)^s}\bigg).\label{sumodd}
			\end{alignat}
			In this case, the generating function cannot be written as an Euler product.  To visualize the set that the sum in \eqref{sumodd} runs over, consider the grid 
			\[L_q=q\Z[i]+\set{r,ir,(1+i)r,(1-i)r:r\in\R}\]
			(see Figure~\ref{visible}).  Observe that the sum is taken over one-fourth of the points of $\mathcal{V'}$ lying on the grid $L$, that is, one point
			out of the four points of $\mathcal{V'}\cap L$ that are equivalent under the action of $C_4$ appears in the sum.
			
			\begin{figure}[ht]
				\begin{center}
					\includegraphics{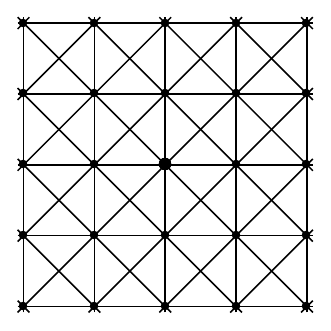}
				\end{center}
				\caption{The grid $L_q$. The black dots are points of $q\Z[i]$.}
				\label{visible}
			\end{figure}
			
			Proposition~\ref{euclalg} also gives the following lower bound on the coincidence index of a coincidence rotation of $1/q+\G$.
					
			\begin{cor}
				Let $\G=\Z[i]$ and $q>1$ be an odd rational integer.  If $R=R_{z,\vep}\in \SOC(1/q+\G)\setminus P(\G)$ then $\S(R)>(1/2)q^2$.
			\end{cor}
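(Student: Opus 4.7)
The plan is to combine Lemma~\ref{divrule} with the hypothesis $R\notin P(\G)$ and then conclude via an elementary identity for $N(z)$.

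First, I unpack $R\notin P(\G)$. Since $R_{z,\vep}$ acts on $\C$ as multiplication by $\vep z/\overline{z}$ and the rotations in $P(\G)$ are precisely the four multiplications by the units $\pm 1,\pm i$ of $\Z[i]$, the hypothesis $R\notin P(\G)$ is equivalent to $z/\overline{z}$ not being a unit; equivalently, $z$ fails to lie on any of the four real lines $\R$, $i\R$, $(1+i)\R$, $(1-i)\R$. Written out coordinatewise, this yields four simultaneous non-vanishing assertions: the rational integers $\Re{z}$, $\Im{z}$, $\Re{z}+\Im{z}$, and $\Re{z}-\Im{z}$ are all nonzero.

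Second, Lemma~\ref{divrule} gives $q\mid(\vep z-\overline{z})$ in $\Z[i]$. Plugging in the case-by-case evaluations from Equation~\eqref{ezmcz} and using that the odd rational integer $q$ is coprime in $\Z[i]$ both to $2$ and to $1\pm i$, this Gaussian divisibility collapses to a rational divisibility depending on $\vep$: exactly one of $\Im{z}$, $\Re{z}$, $\Re{z}+\Im{z}$, or $\Re{z}-\Im{z}$ (according as $\vep=1, -1, i,$ or $-i$) is a multiple of $q$ in $\Z$.

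Now I split on $\vep$. For $\vep=\pm 1$ the bound is immediate: the relevant component is a nonzero integer multiple of $q$, hence has absolute value at least $q$, so $\S(R)=N(z)\geq q^2>\tfrac{1}{2}q^2$. For $\vep=\pm i$ I invoke the identity
\[
2N(z)=(\Re{z}+\Im{z})^2+(\Re{z}-\Im{z})^2,
\]
where one summand is the square of a nonzero multiple of $q$, so $\geq q^2$, while the other is the square of a nonzero rational integer, so $\geq 1$; this gives $2\S(R)\geq q^2+1$, whence $\S(R)>\tfrac{1}{2}q^2$. The only non-routine step is the reduction of the Gaussian integer divisibility in Lemma~\ref{divrule} to a rational one, which works precisely because $q$ is odd and hence coprime in $\Z[i]$ to the ramified prime $1+i$; everything else is an organized case split.
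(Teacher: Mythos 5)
Your proof is correct, but it takes a genuinely different route from the paper's. The paper obtains this corollary as an immediate consequence of Proposition~\ref{euclalg}: write $z=kq+r$ with $N(r)<\tfrac{1}{2}N(q)$; if one had $\S(R)=N(z)\leq\tfrac{1}{2}q^2$, then one could take $k=0$ and $r=z$ in the Euclidean division (equality $N(z)=\tfrac{1}{2}q^2$ being impossible since $q^2$ is odd), and Proposition~\ref{euclalg} would force $\overline{z}=\vep z$, making $\frac{z}{\overline{z}}$ a unit and hence $R\in P(\G)$ --- a two-line contradiction. You instead bypass Proposition~\ref{euclalg} entirely and argue directly from Lemma~\ref{divrule} and the explicit formulas~\eqref{ezmcz}: since the odd integer $q$ is coprime in $\Z[i]$ to $2$ and to $1\pm i$, the Gaussian divisibility $q\mid(\vep z-\overline{z})$ collapses to $q$ dividing one of the rational integers $\Im{z}$, $\Re{z}$, $\Re{z}\pm\Im{z}$, which is nonzero precisely because $R\notin P(\G)$; elementary inequalities (including the identity $2N(z)=(\Re{z}+\Im{z})^2+(\Re{z}-\Im{z})^2$) then finish the argument. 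Both proofs are sound. What yours buys is independence from Proposition~\ref{euclalg} and sharper, $\vep$-dependent information: $\S(R)\geq q^2$ when $\vep=\pm 1$, and $2\S(R)\geq q^2+1$ when $\vep=\pm i$, which also shows where the bound $\tfrac{1}{2}q^2$ is nearly attained; what the paper's proof buys is brevity, since it is literally Proposition~\ref{euclalg} applied with $k=0$. One cosmetic remark: your claim that \emph{exactly} one of the four quantities is divisible by $q$ is more than you actually establish (you only prove divisibility of the one matching $\vep$; exclusivity would need Proposition~\ref{oddden}(i) or the coprimality of $\Re{z}$ and $\Im{z}$), but nothing downstream uses the exclusivity, so this is harmless.
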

			\begin{proof}
				Write $z=kq+r$ where $k,r\in\Z[i]$ and $N(r)<(1/2)N(q)$.  If $\S(R)\leq (1/2)q^2$, then $r=z$.  However, $z/\overline{z}$ is not a unit which contradicts 
				Proposition~\ref{euclalg}.
			\end{proof}

			Finally, we want to return to $\OC(x+\G)$. The picture is far less complete here and we just mention the following result.	
			\begin{prop}\label{denOCgroup}
				Let $x=p/q$ where $p,q\in\Z[i]$ with $p$ and $q$ relatively prime.  If none of the prime factors of $N(q)$ is a splitting prime of $\Z[i]$, then 
				$\OC(x+\G)$ is a group.
			\end{prop}
			\begin{proof}
				From Lemma~\ref{OCsq}, it suffices to show that the product of any two coincidence reflections $T_1=T_{z_1,\vep_1}$ and $T_2=T_{z_2,\vep_2}$ of $x+\G$ is 
				in $\SOC(x+\G)$ to prove the claim.
				
				Since none of the prime factors of $N(q)$ splits in $\Z[i]$, $\overline{q}=\overline{u}q$ for some unit $u$ of $\Z[i]$.  It follows from
				Lemma~\ref{RzvepSOC} that for $j\in\set{1,2}$, $q\mid (u\vep_jz_j\overline{p}-\overline{z_j}p)$.  Set $g\vcentcolon=\gcd(z_1,z_2)$ and write $z_j=h_jg$ for
				$j\in\set{1,2}$.  Then $q$ divides $u\vep_2z_2\overline{z_1}\overline{p}-u\vep_1\overline{z_2}z_1\overline{p}=
				u\vep_1g\overline{g}\overline{p}\big(\vep_2\overline{\vep_1}h_2\overline{h_1}-\overline{h_2}h_1\big)$, and hence, 
				$q\mid\lt(\vep_2\overline{\vep_1}h_2\overline{h_1}-\overline{h_2}h_1\rt)$.  Finally, because $T_2T_1=R_{h_2\overline{h_1},
				\vep_2\overline{\vep_1}}\in \SOC(\G)$, the product $T_2T_1\in \SOC(x+\G)$ by Lemma~\ref{divrule}.
			\end{proof} 			

		\subsection{Specific examples}  
		
			In order to illustrate our results we now explicitly compute $\OC(x+\G)$ for certain values of ${x\in\Q(i)}$. We discuss three examples, all of which are related 
			to the smallest splitting prime $5$ of $\Z[i]$.  The first two, $x=1/5$ and $x=i/(1+2i)$, share the same group $\SOC(x+\G)$ but their sets $\OC(x+\G)$ differ considerably. 
			The third example is $x=(2+i)/6$, where now the numerator instead of the denominator is related to the splitting prime $5$.  This will provide us with the simplest example 
			where the function counting the coincidence rotations is not multiplicative.  Further examples can be found in~\cite{LZ10,L10}.
                        
			In the following, the number of coincidence rotations and CSLs for a given index $m$ of the shifted lattice $x+\G$ shall be denoted by $\hat{f}_{x+\G}(m)$ and $f_{x+\G}(m)$, 
			respectively.

			\begin{ex}\label{den5} 
				Let us consider the case $x=1/5$ first. As $x$ is real, it is invariant under complex conjugation, or in other words, there exists a reflection 
				leaving $x+\G$ invariant.  This assures us that $\OC(x+\G)$ is a group.
		
				Here, the denominator is $q=5$. Write $z=5k+r$ where $k,r\in\Z[i]$ and $N(r)<25/2$.  For all possible remainders $r$, $\overline{r}$ is not an 
				associate of $r$ if and only if ${5\mid N(z)}$.  It follows then from Propositions~\ref{oddden} and~\ref{euclalg} that for all numerators $z$, there is a (unique) unit 
				$\vep$ of $\Z[i]$ for which $R_{z,\vep}\in \SOC(x+\G)$ if and only if $5\nmid N(z)$.  This means that $\SOC(x+\G)\cong{\Z}^{(\aleph_0)}$.  Moreover, 
				$\OC(x+\G)=\SOC(x+\G)\rtimes\langle T_{r}\rangle$ by Proposition~\ref{OCsqprop}, and
				\[\hat{f}_{x+\G}(m)=f_{x+\G}(m)=\begin{cases} f_{\Z^2}(m), &\text{if }5\nmid m\\ 0, &\text{otherwise.}\end{cases}\]  
				The function $f_{x+\G}$ is still multiplicative and the Dirichlet series generating function for $f_{x+\G}(m)$ is given by
				\begin{alignat*}{2}
					\Phi_{x+\G}(s)&=\sum_{m=1}^{\infty}{\frac{f_{x+\G}(m)}{m^s}}=\frac{1-5^{-s}}{1+5^{-s}}\cdot\Phi_{\Z^2}(s)\\
									&=1+\tfrac{2}{13^s}+\tfrac{2}{17^s}+\tfrac{2}{29^s}+\tfrac{2}{37^s}+\tfrac{2}{41^s}+\tfrac{2}{53^s}+\tfrac{2}{61^s}+\tfrac{2}{73^s}+\\
									&\quad{}\tfrac{2}{89^s}+\tfrac{2}{97^s}+\tfrac{2}{101^s}+\tfrac{2}{109^s}+\tfrac{2}{113^s}+\tfrac{2}{137^s}+\tfrac{2}{149^s}+\tfrac{2}{157^s}+\\
									&\quad{}\tfrac{2}{169^s}+\tfrac{2}{173^s}+\tfrac{2}{181^s}+\tfrac{2}{193^s}+\tfrac{2}{197^s}+\tfrac{4}{221^s}+\tfrac{2}{229^s}+\cdots\,.
				\end{alignat*}
				One can show using a specific case of Delange's theorem (see for instance, \cite[Theorem 5 of Appendix]{BM99}) that the number of CSLs of $x+\G$ with index
				at most $N$ is asymptotically $2N/(3\pi)$.
			\end{ex}
			
			\begin{ex}\label{exnotgroup}
				Setting $x=i/(1+2i)$ provides us with an example where $\OC(x+\G)$ is not a group.  In this case, the denominator of $x$ is $q=1+2i$.  Since $5=\lcm(q,\overline{q})$, 
				\[\SOC(x+\G)=\SOC(\tfrac{1}{5}+\G)\cong{\Z}^{(\aleph_0)}\] by 
				Corollary~\ref{conjSOC}.  Observe that $\OC(x+\G)$ does not include a reflection symmetry by Lemma~\ref{RzvepSOC}.  From Example~\ref{den5} and Proposition~\ref{OCsqprop}, 
				we have $5\mid N(z)$ whenever $T_{z,\vep}=R_{z,\vep}\cdot T_r\in \OC(x+\G)$.  

				Given a numerator $z$ whose norm is divisible by 5, either $1+2i$ or $1-2i$ (and not both) appears in the factorization of $z$ into primes of $\G$.  If 
				$(1-2i)\mid z$, then $z\overline{x}\in\G$ which means that ${\vep z\overline{x}-\overline{z}x\in\G}$ for all units $\vep$ of $\Z[i]$.  On the other hand,
				if $(1+2i)\mid z$ then $\vep z\overline{x}-\overline{z}x=i(-\vep y-\overline{y})/5$, where $y=(1+2i)z$.  This implies that 
				$\vep z\overline{x}-\overline{z}x\notin\G$ for all units $\vep$ of $\Z[i]$, since otherwise, $R_{y,-\vep}\in \SOC(1/5+\G)$ by Lemma~\ref{divrule} 
				which is impossible because $5\mid N(y)$.  
				
				Therefore, by Lemma~\ref{RzvepSOC},
				\[\OC(x+\G)=\SOC(x+\G)\cup\set{T_{z,\vep}\in \OC(\G): (1-2i)\mid z}.\]
				We claim that $\OC(x+\G)$ is not a group.  Indeed, if $T_j=T_{z,\vep_j}\in \OC(x+\G)\setminus \SOC(x+\G)$ for $j\in\set{1,2}$ with $\vep_1\neq\vep_2$,  	
				then $T_2T_1\notin \SOC(x+\G)$.
		
				Since $\SOC(x+\G)=\SOC(1/5+\G)$, one concludes that $\hat{f}_{x+\G}(m)=\hat{f}_{1/5+\G}(m)$.  Denote by $\hat{F}_{x+\G}(m)$ the number of 
				linear coincidence isometries of $x+\G$ of index $m$.  Since each non-identity rotation symmetry is not a coincidence rotation of $x+\G$, by 
				Proposition~\ref{countCSLshift}, $f_{x+\G}(m)=\hat{F}_{x+\G}(m)$.  It is remarkable that $f_{x+\G}$ is still multiplicative, even though $\OC(x+\G)$ is not a group.  
				It is given by 
				\[f_{x+\G}(p^r)=\begin{cases}
				2, &\text{if }p\equiv 1\imod{4}\text{ and }p\neq 5\\
				4, &\text{if }p=5\\
				0, &\text{otherwise},
				\end{cases}\]
				for primes $p$ and $r\in\N$.  The Dirichlet series generating function for $f_{x+\G}(m)$ reads
				\begin{alignat*}{2}
					\Phi_{x+\G}(s)&=\sum_{m=1}^{\infty}{\frac{f_{x+\G}(m)}{m^s}}=\frac{1+3\cdot 5^{-s}}{1+5^{-s}}\cdot\Phi_{\Z^2}(s)\\
					&=1+\tfrac{4}{5^s}+\tfrac{2}{13^s}+\tfrac{2}{17^s}+\tfrac{4}{25^s}+\tfrac{2}{29^s}+\tfrac{2}{37^s}
					+\tfrac{2}{41^s}+\tfrac{2}{53^s}+\tfrac{2}{61^s}+\tfrac{8}{65^s}+\tfrac{2}{73^s}+\cdots\,.					
				\end{alignat*}
				Looking at $\Phi_{x+\G}(s)$, we have that the number of CSLs of $x+\G$ of index at most $N$ is asymptotically given by $4N/(3\pi)$.		
			\end{ex}	
			
			\begin{ex} 
				Our last example is $x=(2+i)/6$. Here,  the denominator of $x$ is $q=6=2\cdot 3$.  Hence, by Corollary~\ref{intSOC}, 
				\[\SOC(x+\G)=\SOC(\tfrac{1}{2}+\G)\cap \SOC(\tfrac{1}{3}+\G).\] 
				From~\cite[Example 3]{LZ10},  $R_{z,\vep}\in \SOC(1/2+\G)$ if and only if $\vep=\pm 1$.  Write $z=3k+r$, where $k,r\in\Z[i]$ and $N(r)<9/2$. 
				Note that for all possible remainders $r$, $\vep=\overline{r}/{r}=\pm 1$ if and only if $N(r)\equiv {1\imod{3}}$. It follows then from Proposition~\ref{euclalg} that 
				$R_{z,\vep}\in \SOC(x+\G)$ for some (unique) $\vep\in\set{1,-1}$ if and only if $N(z)\equiv{1\imod{3}}$.
				Thus, ${\SOC(x+\G)}\cong{\Z}^{(\aleph_0)}$ and	
				\[\hat{f}_{x+\G}(m)=\begin{cases}
					f_{\Z^2}(m), &\text{if }m\equiv 1\imod{3}\\
					0, &\text{otherwise}.
				\end{cases}\]
				Here, $\hat{f}_{x+\G}$ is not multiplicative anymore despite the fact that both $\hat{f}_{1/2+\G}$ and $\hat{f}_{1/3+\G}$ are multiplicative~\cite{LZ10,L10}.
				However, $\hat{f}_{x+\G}(m)=(1/2)\big(1+\chi_{-3}(m)\big)f_{\Z^2}(m)$ is the sum of two multiplicative functions.  Hence, each term of $\hat{f}_{x+\G}(m)$ has 
				an Euler product which allows us to explicitly calculate its Dirichlet series generating function given by
				\begin{alignat*}{2}
					\hat{\Phi}_{x+\G}(s)&=\sum_{m=1}^{\infty}{\frac{\hat{f}_{x+\G}(m)}{m^s}}\\
					&=\tfrac{1}{2}\Phi_{\Z^2}(s)+\frac{1}{1-2^{-s}}\cdot\frac{1}{1-3^{-2s}}\cdot\frac{L(s,\chi_{-3})L(s,\chi_{12})}{2\zeta(2s)}\\
					&=1+\tfrac{2}{13^s}+\tfrac{2}{25^s}+\tfrac{2}{37^s}+\tfrac{2}{61^s}+\tfrac{2}{73^s}+\tfrac{4}{85^s}+\tfrac{2}{97^s}+\cdots\,,
				\end{alignat*}
				where $L(s,\chi_{-3})$ and $L(s,\chi_{12})$ are the $L$-series of the primitive Dirichlet characters 
				\begin{alignat*}{2}
					\chi_{-3}(m)&=\begin{cases}
					1, & \text{if }m\equiv 1\imod{3}\\
					-1, & \text{if }m\equiv 2\imod{3}\\
					0, & \text{otherwise}\end{cases}
					\quad\text{and}\\
					\chi_{12}(m)&=\begin{cases}
					1, & \text{if }m\equiv 1,11\imod{12}\\
					-1, & \text{if }m\equiv 5,7\imod{12}\\
					0, & \text{otherwise},
					\end{cases}
				\end{alignat*}
        respectively.  One obtains that the number of coincidence rotations of $x+\G$ of index at most $N$ is asymptotically $N/(2\pi)$.
				
				Again, ${\OC(x+\G)}$ does not contain a reflection symmetry. Nevertheless, $\OC(x+\G)$ forms a group by Proposition~\ref{denOCgroup} since $N(q)=2^2\cdot 3^2$.
				Proposition~\ref{OCsqprop} indicates that if the coincidence reflection $T_{z,\vep}\in \OC(x+\G)$ then $N(z)\equiv {2\imod{3}}$. Conversely, suppose that  $z$ is a numerator
				with $N(z)\equiv {2\imod{3}}$.  Observe that the numerator $p=2+i$ of the shift $x$ is a factor of $5$ which splits in $\Z[i]$.  This means that if $p\nmid z$, 
				$y\vcentcolon=z\overline{p}$ is still a numerator corresponding to some coincidence rotation of $\G$.  In fact, because ${N(y)\equiv  1\imod{3}}$, $R_{y,\vep}\in \SOC(x+\G)$
				for some (unique) $\vep\in\set{1,-1}$.  Hence, $6\mid(\vep y-\overline{y})$ by Lemma~\ref{divrule}, and one obtains that 
				$\vep z\overline{x}-\overline{z}{x}=(1/6)(\vep y-\overline{y})\in\G$.  The case where $p\mid z$ yields the same result.  Thus, $T_{z,\vep}\in \OC(x+\G)$ by 
				Lemma~\ref{RzvepSOC}.  Altogether one has
				\[
					\OC(x+\G)=\SOC(x+\G)\;\cup
					\set{T_{z,\vep}: N(z)\equiv 2\imod{3} 
					\text{ and }\vep=\begin{cases}
								\sst{1,}&\sst{\text{if }3\nmid\Re{z\overline{p}}}\\
								\sst{-1,}&\sst{\text{if }3\mid\Re{z\overline{p}}}
							\end{cases}}.
				\]
				From this we infer $\hat{F}_{x+\G}(m)=f_{x+\G}(m)=f_{\Z^2}(m)$, where $\hat{F}_{x+\G}(m)$ counts the number of linear coincidence isometries of $x+\G$ of a given index $m$. 
				Note that $f_{x+\G}$ and $\hat{F}_{x+\G}$ are multiplicative, whereas $\hat{f}_{x+\G}$ is not.
			\end{ex}

	\section{Linear coincidences of crystallographic point packings}\label{sectmultlatt}

		We now take a further step and consider the coincidence problem this time for sets of points formed by finite unions of shifted lattices.  Such sets are of 
		particular interest in crystallography because they are a standard model for ideal crystals. We briefly recall the notion of crystallographic point packings here and refer for 
		further reading to~\cite{BG13,PZ98} and references therein.

		A subset $L$ of $\Rd$ shall be called a \emph{crystallographic point packing} or a \emph{multilattice generated by the lattice $\G$ in $\Rd$} if $L$ is the union of $\G$ and a
		finite number of translated copies of $\G$, that is, $L=\bigcup_{k=0}^{m-1}(x_k+\G)$ where $x_k\in\Rd$, $m\in\N$, and $x_0=0$.  In general, a crystallographic point packing is not a 
		lattice.  An orthogonal transformation $R\in \OG(d)$  will be called a \emph{linear coincidence isometry of $L$} if $L(R)\vcentcolon=L\cap RL$ includes a cosublattice of 
		some shifted lattice $x_k+\G$, $0\leq k\leq m-1$.  The intersection $L(R)$ shall be referred to as the \emph{coincidence site packing} (CSP) of $L$ 
		generated by $R$.  The density of $L(R)$ in $L$, by this we mean the ratio of the density of points in $L$ by the density of points in $L(R)$, is the 
		\emph{coincidence index of $R$ with respect to $L$}, which is denoted by $\S_L(R)$.  Note that $\S_L(R)$ is not necessarily an integer.

		The next lemma describes exactly when the intersection of the shifted lattice $x_k+\G$ and the image of the shifted lattice $x_j+\G$ under a linear isometry
		forms a cosublattice of $x_k+\G$.
		
		\begin{lemma}\label{intersectLgen}
			Suppose $\G$ is a lattice in $\Rd$, $R\in \OG(d)$, and $x_j, x_k\in\Rd$.  Then $(x_k+\G)\cap R(x_j+\G)$ contains a cosublattice of $x_k+\G$ if and only if 
			$R\in \OC(\G)$ and $Rx_j-x_k\in\G+R\G$.  Moreover, if $Rx_j-x_k\in \ell_{j,k}+R\G$  with $\ell_{j,k}\in\G$, then 
			\begin{equation}\label{CSML}
					(x_k+\G)\cap R(x_j+\G)=(x_k+\ell_{j,k})+\G(R).
			\end{equation}
		\end{lemma}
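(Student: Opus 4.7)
The plan is to reduce the stated intersection to the one analyzed in Section~\ref{affcoinc} by a single translation. Setting $v:=Rx_j-x_k$ and using $R(x_j+\G)=Rx_j+R\G$, one has
\[(x_k+\G)\cap R(x_j+\G)\;=\;x_k+\bigl[\G\cap(v+R\G)\bigr]\;=\;x_k+\bigl[\G\cap(v,R)\G\bigr].\]
Once this identification is in place, both directions of the characterization and the formula~\eqref{CSML} will fall out of Theorem~\ref{affisom} and Lemma~\ref{affint} applied to the isometry $(v,R)$.

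For the sufficiency, I would assume $R\in OC(\G)$ and $Rx_j-x_k\in\G+R\G$. Theorem~\ref{affisom} then places $(v,R)$ in $AC(\G)$, and writing $v\in\ell_{j,k}+R\G$ with $\ell_{j,k}\in\G$, Lemma~\ref{affint} gives $\G\cap(v,R)\G=\ell_{j,k}+\G(R)$. Translating back by $x_k$ produces the announced formula, and in particular exhibits $(x_k+\G)\cap R(x_j+\G)$ as a cosublattice of $x_k+\G$ of index $\S_\G(R)$.

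For the necessity, I would start from the hypothesis that $(x_k+\G)\cap R(x_j+\G)$ contains a cosublattice of $x_k+\G$. Non-emptiness of the intersection supplies $\ell,\ell'\in\G$ with $x_k+\ell=Rx_j+R\ell'$, whence $v=\ell-R\ell'\in\G+R\G$. A second application of Lemma~\ref{affint} then identifies the intersection with $(x_k+\ell)+(\G\cap R\G)$, so the cosublattice hypothesis forces $\G\cap R\G$ to contain a finite-index sublattice of $\G$; since $\G\cap R\G$ is itself a subgroup of $\G$, this yields $\G\sim R\G$ and hence $R\in OC(\G)$.

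I do not anticipate a genuine obstacle: the lemma is essentially Theorem~\ref{affisom} and Lemma~\ref{affint} transported along the translation $x\mapsto x-x_k$. The only point warranting care is the necessity direction, where one must verify that the mere containment of a cosublattice -- rather than equality of the intersection with one -- already forces $[\G:\G\cap R\G]<\infty$; this is immediate after the translation, since any finite-index sublattice of $\G$ sitting inside the subgroup $\G\cap R\G$ witnesses the finite index of $\G\cap R\G$ in $\G$.
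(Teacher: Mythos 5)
Your proposal is correct and follows essentially the same route as the paper: both reduce the intersection to $\G\cap(Rx_j-x_k,R)\G$ by translating by $x_k$, and then settle everything with Theorem~\ref{affisom} and Lemma~\ref{affint} (equation~\eqref{ACSL}). The only difference is cosmetic: where you unroll the necessity half of Theorem~\ref{affisom} by hand (non-emptiness giving $Rx_j-x_k\in\G+R\G$, then the finite-index argument), the paper simply observes that after the translation the containment hypothesis is, by definition, the statement $(Rx_j-x_k,R)\in AC(\G)$, and cites the theorem for both directions at once.
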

		\begin{proof}
			Write $(x_k+\G)\cap R(x_j+\G)=(x_k,\mathbbm{1}_d)[\G\cap(Rx_j-x_k,R)\G]$.  Then the intersection $(x_k+\G)\cap R(x_j+\G)$ contains a cosublattice of 
			$x_k+\G$ if and only if $R\in \OC(\G)$ and $Rx_j-x_k\in\G+R\G$ by Theorem~\ref{affisom}.  Equation~\eqref{CSML} follows from~\eqref{ACSL}.
		\end{proof}
		
		Equation~\eqref{CSML} tells us that given an $R\in \OC(\G)$ satisfying $Rx_j-x_k\in\G+R\G$, then the intersection $(x_k+\G)\cap R(x_j+\G)$ does not only 
		contain a cosublattice of $x_k+\G$, but is itself a cosublattice of $x_k+\G$.  In addition, the index of the cosublattice $(x_k+\G)\cap R(x_j+\G)$ in $x_k+\G$ 
		is $\S(R)$.
		
		\begin{rem}
			Let $\G\subseteq\Rd$ be a lattice, $R\in \OG(d)$, and $x_j, x_k\in\Rd$.  The intersection  $(x_k+\G)\cap R(x_j+\G)$ is a cosublattice of $x_k+\G$ if and 
			only if it is a cosublattice of $Rx_j+R\G$.  Indeed, if $Rx_j-x_k\in Rt_{j,k}+\G$ with $t_{j,k}\in\G$ then 
			\begin{equation}\label{CSML2}
				(x_k+\G)\cap R(x_j+\G)=(Rx_j-Rt_{j,k})+\G(R).
			\end{equation}
			The cosublattice $(x_k+\G)\cap R(x_j+\G)$ is also of index $\S(R)$ in $Rx_j+R\G$.
		\end{rem}
		
		The following theorem gives the solution of the coincidence problem for a crystallographic point packing.

		\begin{thm}\label{genthm}
			Let $L=\bigcup_{k=0}^{m-1}(x_k+\G)$ be a crystallographic point packing generated by the lattice $\G$ in $\Rd$, where $x_k\in\Rd$ for $0\leq k\leq m-1$, 
			$x_0=0$, and $x_k-x_j\notin\G$ whenever $k\neq j$.
			\begin{enumerate}[\rm(i)]
				\item The set of linear coincidence isometries of $L$ is $\OC(\G)$.

				\item Given an $R\in \OC(\G)$, let \[\sigma=\set{(x_j,x_k):Rx_j-x_k\in\G+R\G}.\]  Then \[\S_L(R)=\tfrac{m}{\abs{\sigma}}\S(R).\]  In addition, if 
				$Rx_j-x_k=\ell_{j,k}+Rt_{j,k}$ with $\ell_{j,k}, t_{j,k}\in\G$, then 
				\begin{equation}\label{unionform}
					L(R)=\bigcup_{(x_j,x_k)\in \sigma}[(x_k+\ell_{j,k})+\G(R)]
						=\bigcup_{(x_j,x_k)\in \sigma}[(Rx_j-Rt_{j,k})+\G(R)].					
				\end{equation}
			\end{enumerate}
		\end{thm}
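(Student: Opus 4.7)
The plan is to decompose $L(R) = L \cap RL$ as a disjoint double union indexed by $(j,k)$ and apply Lemma~\ref{intersectLgen} to each piece. Since $x_k - x_j \notin \G$ for $k \neq j$, the translates $x_k + \G$ are pairwise disjoint; applying $R$ and using $Rx_k - Rx_j \notin R\G$ shows the $R(x_j + \G)$ are pairwise disjoint as well. Because membership in a single $x_k + \G$ and in a single $R(x_j + \G)$ pins down $k$ and $j$ uniquely, one obtains
\[L(R) = \bigsqcup_{j,k} \bigl[(x_k + \G) \cap R(x_j + \G)\bigr].\]

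For part (i), the forward direction is immediate: if $R \in OC(\G)$, then the $(j,k) = (0,0)$ piece is $\G \cap R\G = \G(R)$, a sublattice (hence cosublattice) of $x_0 + \G = \G$, so $R$ is a linear coincidence isometry of $L$. For the converse, suppose $R$ is a linear coincidence isometry of $L$, so $L(R)$ contains a cosublattice $C$ of some $x_k + \G$. Then $C \subseteq L(R) \cap (x_k + \G) = \bigsqcup_j [(x_k + \G) \cap R(x_j + \G)]$. Rewriting each nonempty piece as $(x_k, \mathbbm{1}_d)\bigl[\G \cap (Rx_j - x_k, R)\G\bigr]$, the translation argument from Lemma~\ref{affint} shows that it is a translate of $\G \cap R\G$. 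If one had $R \notin OC(\G)$, then $\G \cap R\G$ would fail to be a sublattice of finite index and thus have rank strictly less than $d$, so each piece would lie in a proper affine subspace of $\Rd$. A finite union of proper affine subspaces cannot contain the full-rank cosublattice $C$, forcing $R \in OC(\G)$.

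For part (ii), applying Lemma~\ref{intersectLgen} termwise shows that $(x_k + \G) \cap R(x_j + \G) \neq \varnothing$ exactly when $(x_j, x_k) \in \sigma$, in which case it equals $(x_k + \ell_{j,k}) + \G(R)$. Using $Rx_j - x_k = \ell_{j,k} + Rt_{j,k}$, one rewrites this as $(Rx_j - Rt_{j,k}) + \G(R)$, which gives both forms in~\eqref{unionform}. For the coincidence index I would compare point densities: $L$ is a disjoint union of $m$ translates of $\G$ and so has density equal to $m$ times that of $\G$, while $L(R)$ is a disjoint union of $|\sigma|$ cosets of $\G(R)$ and so has density equal to $|\sigma|/\S(R)$ times that of $\G$. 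Taking the ratio yields $\S_L(R) = m\S(R)/|\sigma|$.

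The main obstacle is the converse direction of (i): one has to exclude the possibility that rank-deficient intersections combine to contain a full-rank cosublattice. The rank (or equivalently density) argument rules this out cleanly. Everything else is bookkeeping built on top of Lemma~\ref{intersectLgen} and the disjointness of the decomposition.
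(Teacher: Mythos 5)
Your proposal is correct and follows essentially the same route as the paper: the identical disjoint double-union decomposition of $L(R)$, termwise application of Lemma~\ref{intersectLgen} to identify the nonempty pieces with $\sigma$ and obtain \eqref{unionform}, and a density comparison giving $\S_L(R)=\tfrac{m}{\abs{\sigma}}\S(R)$. The only difference is one of detail: in the converse of (i), where the paper tersely asserts that finiteness of the union forces some piece $(x_k+\G)\cap R(x_j+\G)$ to be a cosublattice, you make this step explicit with the rank argument that finitely many translates of the (possibly rank-deficient) group $\G\cap R\G$, each lying in a proper affine subspace, cannot cover a full-rank cosublattice.
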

		\begin{proof}
			The intersection $L(R)$ can be expressed as the disjoint union 
			\begin{equation}\label{expand}
				L(R)=L\cap RL=\bigcup_{j=0}^{m-1}\bigcup_{k=0}^{m-1}[(x_k+\G)\cap R(x_j+\G)].
			\end{equation}
    	\begin{enumerate}[\rm(i)]
				\item Suppose $R$ is a linear coincidence isometry of $L$.  Then there is some shifted lattice $x_k+\G$ for which $(x_k+\G)\cap RL$ contains a cosublattice
				of $x_k+\G$.  Thus, ${(x_k+\G)\cap R(x_j+\G)\neq\varnothing}$ for some $j$ with $0\leq j\leq m-1$.  However, the number of shifted copies of $\G$ in $L$ 
				is finite.  This implies that the intersection $(x_k+\G)\cap R(x_j+\G)$ must be also a cosublattice of $x_k+\G$.  It now follows from 
				Lemma~\ref{intersectLgen} that $R\in \OC(\G)$.  Conversely, if $R\in \OC(\G)$ then the sublattice $\G(R)$ of $\G$ appears in $L(R)$.  Thus, $R$ is a 
				linear coincidence isometry of $L$.  
							
				\item Since $(x_0,x_0)\in \sigma$, $\abs{\sigma}\neq 0$.  One sees from Lemma~\ref{intersectLgen} that $(x_k+\G)\cap R(x_j+\G)\neq\varnothing$ whenever
				$(x_j,x_k)\in\sigma$.  Applying~\eqref{CSML} and~\eqref{CSML2} to each intersection of the disjoint union in~\eqref{expand} yields~\eqref{unionform}.
				Now, each $(x_j,x_k)\in\sigma$ contributes a different shifted copy of $\G(R)$ to $L(R)$.  This means that $L(R)$ is made up of $\abs{\sigma}$ distinct 
				shifted copies of $\G(R)$, each of which is of index $\S(R)$ in the respective shifted copy of $\G$ (or $R\G$).  Because $L$ consists of $m$ separate
				shifted copies of $\G$, the formula for $\S_L(R)$ follows.\qedhere
			\end{enumerate}
		\end{proof}
		
		Therefore, the set of linear coincidence isometries of the crystallographic point packing $L$ generated by $\G$ is still $\OC(\G)$, albeit the coincidence indices of an 
		$R\in \OC(\G)$ with respect to $\G$ and $L$ are not necessarily equal.  Moreover, $L(R)$ consists of cosublattices of shifted lattices in~$L$, one of 
		which must always be $\G(R)$.

	\section{Linear coincidences of the diamond packing}\label{diamondpacking}
	
		The \emph{diamond packing} or \emph{tetrahedral packing} \cite{CS99} is made up of two face-centered cubic (f.c.c.) lattices, wherein one of the f.c.c.~lattices is a 
		translate of the other by $(1/4)(a,a,a)$, with $a$ being the length of the edges of a conventional unit cell of the f.c.c.~lattice (see 
		Figure~\ref{diamond}).  It is also known as the packing $D_3^+$ and is not a lattice.  An equivalent way of constructing the diamond packing as a 
		motif of vertices of tetrahedrons and their barycenters can be found in~\cite{S08}.  Here, we use the results of Section~\ref{sectmultlatt} to identify the 
		linear coincidence isometries, coincidence indices, and the resulting intersections of the diamond packing. To this end, we first recall the corresponding 
		results for cubic lattices.
		
		\begin{figure}[ht]
			\begin{center}
				\includegraphics{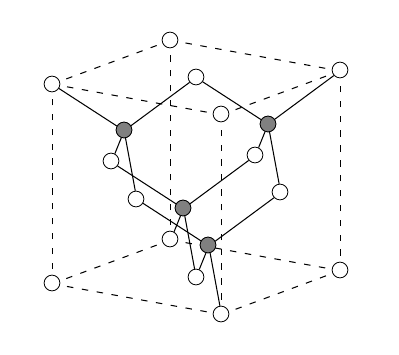}
			\end{center}
			\caption{A conventional unit cell of the diamond packing.  The white dots are part of the f.c.c.~lattice while the gray dots belong to the shifted 
			f.c.c. lattice}\label{diamond}
		\end{figure}

		\subsection{Solution of the coincidence problem for the cubic lattices}
			We see from Section~\ref{sectmultlatt} that it is imperative that we familiarize ourselves with the coincidences of the f.c.c.~lattice (see 
			\cite{G74,GBW74,G84,B97,Z05}) before we even consider the coincidences of the diamond packing.  Let $\G_P=\Z^3$, 
			$\G_B=\G_P\cup[(1/2,1/2,1/2)+\G_P]$, and $\G_F=\G_B^{\ast}$ denote the primitive cubic (p.c.), body-centered cubic (b.c.c.), and 
			f.c.c.~lattice, respectively.  Then $\OC(\G_P)=\OC(\G_B)=\OC(\G_F)=\OG(3,\Q)$, and if $R\in \OG(3,\Q)$, then $\S_{\G_P}(R)=\S_{\G_B}(R)=\S_{\G_F}(R)$
			~\cite{GBW74,B97}.  Therefore, it is enough to look at the coincidences of the p.c.~lattice.
			
			As in the planar case, the analysis of $\OC(\Z^3)$ starts with the group of coincidence rotations of $\Z^3$.  To this end, 
			Cayley's parametrization of matrices in $\SO(3)$ by quaternions is used~\cite{B97}.  Let us first recall some results about quaternions and introduce some notations.  
			Extensive treatments on quaternions can be found in \cite{KR91,CS03,H19,HW08}.
	
			Let $\set{\e,\ii,\j,\k}$ be the standard basis of $\R^4$ where $\e={(1,0,0,0)}^T$, $\ii={(0,1,0,0)}^T$, ${\j={(0,0,1,0)}^T}$, and $\k={(0,0,0,1)}^T$.  The 
			\emph{quaternion algebra over $\R$} is the associative division algebra $\H\vcentcolon=\H(\R)=\R\e+\R\ii+\R\j+\R\k\cong\R^4$ where multiplication is defined by the 
			relations $\ii^2=\j^2=\k^2=\ii\j\k=-\e$.  An element of $\H$ is called a quaternion, and is written as either $q=q_0\e+q_1\ii+q_2\j+q_3\k$ or $q=(q_0,q_1,q_2,q_3)$.  Given 
			two quaternions $q$ and $p$, their \emph{inner product} is defined as the standard scalar	product of $q$ and $p$ as vectors in $\R^4$.  The \emph{conjugate} of a quaternion 
			$q=(q_0,q_1,q_2,q_3)$ is $\overline{q}=(q_0,-q_1,-q_2,-q_3)$, and its \emph{norm} is $\nrq=q\,\overline{q}=q_0^2+q_1^2+q_2^2+q_3^2\in\R$.  
		
			A quaternion whose components are all integers is called a \emph{Lipschitz quaternion}.  On the other hand, a \emph{Hurwitz quaternion} is a quaternion whose components are all
			integers or all half-integers.  The set of Lipschitz quaternions and Hurwitz quaternions shall be denoted by $\L$ and $\J$, respectively.  A \emph{primitive quaternion} $q$ is a
			quaternion in $\L$ whose components are relatively prime. 
		
			Given a quaternion $q=(q_0,q_1,q_2,q_3)$, its \emph{real part} and \emph{imaginary part} are defined as $\Re{q}=q_0$ and $\Im{q}=q_1\ii+q_2\j+q_3\k$,
			respectively.  The \emph{imaginary space of $\H$} is the three-dimensional vector subspace $\Im\H=\set{\Im{q}:q\in\H}\cong\R^3$ of $\H$.  

			An $R\in \SOC(\Z^3)=\SO(3,\Q)$ can be parametrized by a primitive quaternion $q$ so that for all $x\in\R^3$ viewed as an element of $\Im{\H}$, $R(x)=qxq^{-1}$.  
			In such a case, we denote $R$ by $R_q$.  The coincidence index of $R_q\in \SOC(\Z^3)$ is equal to the odd part of $\nrq$, that is, $\S(R_q)=\nrq/2^{\ell}$, where $\ell$ is the 
			largest power of $2$ that divides $\nrq$~\cite{GBW74,G84,B97}.

			Similarly, a primitive quaternion $q$ can be associated to every $T\in \OC(\Z^3)\setminus \SOC(\Z^3)$ so that $T(x)=-qxq^{-1}=q\bar{x}q^{-1}$ for all $x\in\Im\H$, 
			in which case, $T$ shall be written as $T_q$.  The CSLs generated by $T_q$ and $R_q$ are the same, and so $\S(T_q)=\S(R_q)$.

			Let $f_{\Z^3}(m)$ be the number of CSLs of $\Z^3$ of index $m$.  Once again, $f_{\Z^3}$ is multiplicative~\cite{G84,B97} and its Dirichlet series generating function is given by
			\begin{equation}\label{dircubic}
				\begin{aligned}
					\Phi_{\Z^3}(s)&=\sum_{m=1}^{\infty}{\frac{f_{\Z^3}(m)}{m^s}}=\prod_{p\neq 2}{\frac{1+p^{-s}}{1-p^{1-s}}}=
					\frac{1}{1+2^{-s}}\cdot\frac{\zeta_{\J}(s/2)}{\zeta(2s)}\\
					&=1+\tfrac{4}{3^s}+\tfrac{6}{5^s}+\tfrac{8}{7^s}+\tfrac{12}{9^s}+\tfrac{12}{11^s}+\tfrac{14}{13^s}+
					\tfrac{24}{15^s}+\tfrac{18}{17^s}+
					\tfrac{20}{19^s}+\tfrac{32}{21^s}+\tfrac{24}{23^s}+\cdots,
				\end{aligned}
			\end{equation}
			where $\zeta_{\J}(s)=\big(1-2^{1-2s}\big)\zeta(2s)\zeta(2s-1)$ is the zeta function of $\J$ or the Dirichlet series generating function for the number of
			nonzero right ideals of $\J$, compare~\cite{V80}.  Here, the number of CSLs of index at most $N$ is asymptotically given by $3N^2/\pi^2$.  The number of coincidence rotations of 
			$\Z^3$ for a given index $m$ is given by $\hat{f}_{\Z^3}(m)=24f_{\Z^3}(m)$.  Consequently, the Dirichlet series generating function for $\hat{f}_{\Z^3}(m)$ is 
			$24\Phi_{\Z^3}(s)$ (cf.~\cite[Eq.~(3)]{RMRK04}).

 		\subsection{The diamond packing}\label{cubic}
    
			Take $\G$ to be an f.c.c.~lattice.  We identify $\R^3$ with $\Im{\H}$, and associate $\G$ with  
			\[
				\G=2\Im{\L}\cup[(1,1,0)+2\Im{\L}]
				\cup[(0,1,1)+2\Im{\L}]\cup[(1,0,1)+2\Im{\L}].
			\]
			The dual lattice of $\G$ is the b.c.c.~lattice $\G^{\ast}=\Im{\J}$, and the diamond packing is identified with $D_3^+=\G\cup(x+\G)$, where 
			${x=(1/2)(1,1,1)}$.  It follows from Theorem~\ref{genthm} that the group of linear coincidence isometries of $D_3^+$ is $\OC(\G)=\OC(\G^{\ast})$.
	  
			Theorem~\ref{genthm} suggests that it is necessary that we compute for $\OC(x+\G)$ to ascertain the coincidence index of a linear coincidence isometry $R$ of
			$D_3^+$.  To this end, note that $\G+R\G={[\G^{\ast}(R)]}^{\ast}$, that is, $\G+R\G$ is the dual lattice of the CSL $\G^{\ast}(R)$ of $\G^{\ast}$.  The next lemma, stated 
			in~\cite{Z05}, gives a spanning set for $\G^{\ast}(R)$ over $\Z$.

			\begin{lemma}\label{spanset}
				Let $\G^{\ast}=\Im{\J}$ and $R=R_q\in \SOC(\G^{\ast})$ where $q=(q_0,q_1,q_2,q_3)$ is a primitive quaternion. Let 
				\begin{equation}\label{spanvect}
					\begin{aligned}
						\mathbf{r_0}&\vcentcolon=\Im{q}=(q_1,q_2,q_3),\\
						\mathbf{r_1}&\vcentcolon=\Im{q\ii}=(q_0,q_3,-q_2),\\ 
						\mathbf{r_2}&\vcentcolon=\Im{q\j}=(-q_3,q_0,q_1),\\
						\mathbf{r_3}&\vcentcolon=\Im{q\k}=(q_2,-q_1,q_0). 
					\end{aligned}
				  \end{equation}
				Then the CSL $\G^{\ast}(R)$ of $\G^{\ast}$ is the $\Z$-span of the following 
				vectors:
				\begin{enumerate}[\rm(i)]
					\item $\mathbf{r_0}$, $\mathbf{r_1}$, $\mathbf{r_2}$, $\mathbf{r_3}$, $(1/2)(\mathbf{r_0}+\mathbf{r_1}+\mathbf{r_2}+\mathbf{r_3})$ if $\nrq$ is
					odd,
			
					\item $\mathbf{r_0}$, $(1/2)(\mathbf{r_0}+\mathbf{r_1})$, $(1/2)(\mathbf{r_0}+\mathbf{r_2})$, $(1/2)(\mathbf{r_0}+\mathbf{r_3})$ if 
					$\nrq\equiv 2\imod{4}$,
			
					\item $(1/2)\mathbf{r_0}$, $(1/2)\mathbf{r_1}$, $(1/2)\mathbf{r_2}$, $(1/2)\mathbf{r_3}$ if $\nrq\equiv 0\imod{4}$.
				\end{enumerate}
			\end{lemma}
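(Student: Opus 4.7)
The plan is to show that the $\Z$-span $L$ of the listed generators coincides with $\G^*(R_q) = \Im\J \cap R_q\Im\J$. I will do this in two halves: first that $L \subseteq \G^*(R_q)$, and second that $L$ has the correct covolume (equivalently, the correct index in $\Im\J$); together these force equality. The target covolume is $\S(R_q)\cdot\mathrm{covol}(\Im\J) = |q|^2/2^{\ell+1}$, where $2^{\ell}$ is the largest power of $2$ dividing $|q|^2$.

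For the inclusion, the pivotal observation is that for $x \in \Im\H$ one has $x \in R_q\Im\J$ if and only if $q^{-1}xq \in \Im\J$. Writing $\mathbf{r}_i = \Im{q\e_i} = q\e_i - \Re{q\e_i}$ with $\e_0, \e_1, \e_2, \e_3 := \e, \ii, \j, \k$ and using cyclic invariance of the real part, I obtain $q^{-1}\mathbf{r}_i q = \Im{\e_i q} \in \Im\L \subset \Im\J$. Since $\mathbf{r}_i$ itself has integer components, it already sits in $\Im\L \subset \Im\J$, so each $\mathbf{r}_i$ and every integer combination lies in $\G^*(R_q)$. For the fractional generators I invoke that primitivity of $q$ restricts $|q|^2 \bmod 8$ to precisely the three regimes distinguished by the number of odd components among $q_0, q_1, q_2, q_3$; then a case-specific parity check shows the fractional combination lies in the b.c.c.~lattice $\Im\J$, and the analogous check on its $q^{-1}(\cdot)q$ conjugate places it in $R_q\Im\J$. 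For instance, in case (i) every component of $\sum_i\mathbf{r}_i$ has the same parity as $q_0+q_1+q_2+q_3$, which is odd when $|q|^2$ is odd, so $\tfrac{1}{2}\sum_i\mathbf{r}_i$ has three half-integer components and lies in $\Im\J$.

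For the covolume, the central calculation is that the four $3\times 3$ minors of $[\mathbf{r}_0\mid\mathbf{r}_1\mid\mathbf{r}_2\mid\mathbf{r}_3]$ are, up to sign, $q_0|q|^2, q_1|q|^2, q_2|q|^2, q_3|q|^2$, so by primitivity of $q$ their $\gcd$ equals $|q|^2$, and the $\Z$-span of $\mathbf{r}_0, \mathbf{r}_1, \mathbf{r}_2, \mathbf{r}_3$ has covolume $|q|^2$ in $\R^3$. In case (i), adjoining the half-integer vector $\tfrac{1}{2}\sum_i\mathbf{r}_i$ halves this covolume to $|q|^2/2$, which divided by $\mathrm{covol}(\Im\J) = 1/2$ gives index $|q|^2 = \S(R_q)$ as required. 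In case (iii), the span of the four $\tfrac{1}{2}\mathbf{r}_i$ is precisely $\tfrac{1}{2}$ times the span of the $\mathbf{r}_i$, with covolume $|q|^2/8$ and index $|q|^2/4 = \S(R_q)$ in $\Im\J$.

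Case (ii) will be the main obstacle, because the four listed generators mix integer and half-integer vectors asymmetrically. The approach is first to enlarge the generating set: the identity $\mathbf{r}_i = 2\cdot\tfrac{1}{2}(\mathbf{r}_0+\mathbf{r}_i) - \mathbf{r}_0$ shows every $\mathbf{r}_i$ lies in $L$, and suitable $\pm$-combinations of two of the half-sums show every $\tfrac{1}{2}(\mathbf{r}_i+\mathbf{r}_j)$ lies in $L$. A Smith normal form reduction of the scaled integer matrix with rows $2\mathbf{r}_0, \mathbf{r}_0+\mathbf{r}_1, \mathbf{r}_0+\mathbf{r}_2, \mathbf{r}_0+\mathbf{r}_3$, combined with the linear relation $q_0\mathbf{r}_0 = q_1\mathbf{r}_1+q_2\mathbf{r}_2+q_3\mathbf{r}_3$ and the fact that exactly two of the $q_j$ are odd, will then yield covolume $|q|^2/4 = \S(R_q)/2$ for $L$, matching the target and closing the argument.
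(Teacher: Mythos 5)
There is no internal proof to compare against here: the paper does not prove this lemma at all, but quotes it from~\cite{Z05} (``The next lemma, stated in~\cite{Z05}, gives a spanning set\dots''), so your argument has to be judged on its own merits. Your two-step strategy (inclusion of the span $L$ in $\G^*(R_q)$, then a covolume count forcing equality) is sound, and both pillars check out. The conjugation identity $q^{-1}\,\Im{qu}\,q=\Im{uq}$ for $u\in\set{\e,\ii,\j,\k}$, which follows from cyclic invariance of the real part, does place each $\mathbf{r_i}$ in $\Im{\J}\cap R_q\Im{\J}$, and your parity checks are the right mechanism for the fractional generators: in every case all three components of the relevant vector share one parity, both before and after conjugation, which is exactly membership in the b.c.c.~lattice. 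The minor computation is also correct: the four maximal minors of $[\mathbf{r_0}\mid\mathbf{r_1}\mid\mathbf{r_2}\mid\mathbf{r_3}]$ are $\pm q_i\nrq$, so the Smith-form fact ``covolume of the column lattice equals the gcd of the maximal minors'' gives covolume $\nrq$ for the integral span, and cases (i) and (iii) close exactly as you describe. (Your appeal to $\S(R_q)=\nrq/2^{\ell}$ is legitimate, since the paper states that index formula independently of this lemma.)

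The one genuine gap is case (ii), and you flag it yourself: ``will then yield covolume $\nrq/4$'' is a declaration, not a computation, and it is precisely the case where the mixed integer/half-integer generating set makes the covolume non-obvious. The gap is real but closable with the tools you already set up, and more directly than by a Smith normal form reduction. Expanding multilinearly, the maximal minors of $[2\mathbf{r_0}\mid\mathbf{r_0}+\mathbf{r_1}\mid\mathbf{r_0}+\mathbf{r_2}\mid\mathbf{r_0}+\mathbf{r_3}]$ are
\begin{align*}
  \det\lt(2\mathbf{r_0},\,\mathbf{r_0}+\mathbf{r_i},\,\mathbf{r_0}+\mathbf{r_j}\rt)&=2\det(\mathbf{r_0},\mathbf{r_i},\mathbf{r_j})=\pm 2q_k\nrq,
  \qquad \set{i,j,k}=\set{1,2,3},\\
  \det\lt(\mathbf{r_0}+\mathbf{r_1},\,\mathbf{r_0}+\mathbf{r_2},\,\mathbf{r_0}+\mathbf{r_3}\rt)
  &=(q_0+q_1+q_2+q_3)\,\nrq,
\end{align*}
the second line because the four surviving terms are $\det(\mathbf{r_1},\mathbf{r_2},\mathbf{r_3})$, $\det(\mathbf{r_0},\mathbf{r_2},\mathbf{r_3})$, $-\det(\mathbf{r_0},\mathbf{r_1},\mathbf{r_3})$, $\det(\mathbf{r_0},\mathbf{r_1},\mathbf{r_2})$, whose signed values are $q_0\nrq$, $q_1\nrq$, $q_2\nrq$, $q_3\nrq$. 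In case (ii) the sum $q_0+q_1+q_2+q_3=2m$ is even, and any prime dividing $q_1$, $q_2$, $q_3$ and $m$ would divide $q_0=2m-q_1-q_2-q_3$, contradicting primitivity of $q$; hence the gcd of these minors is exactly $2\nrq$. So $2L$ has covolume $2\nrq$, whence $L$ has covolume $\nrq/4=\S(R_q)/2$, matching your target. With this inserted (and the routine parity verifications written out), your proof is complete; the relation $q_0\mathbf{r_0}=q_1\mathbf{r_1}+q_2\mathbf{r_2}+q_3\mathbf{r_3}$, though correct, turns out not to be needed.
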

 
			We now proceed to determine $\OC(x+\G)$.  In the succeeding calculations, we embed $\Im\H$ in $\H$ via the canonical projection so that vectors in $\Im\H$ 
			are treated as quaternions whose real part is $0$.  
			
			Observe that for $u\in\set{\e,\ii,\j,\k}$, $R=R_q\in \SOC(\G)$, and $x\in\Im\H$, \[\inn{Rx-x}{\Im{qu}}=\inn{uq-qu}{x}.\]
			Denote by $\times$ the usual vector (cross) product of two vectors in $\Im\H\cong\R^3$.  Given $a,b,c\in\Im{\H}$, one has $a\times b=(1/2)(ab-ba)$ and 
			$\inn{a\times b}{c}=\inn{a}{b\times c}$ (see for instance,~\cite{KR91}).  Together, they imply that $\inn{Rx-x}{\Im{qu}}=-2\inn{q}{u\times x}$ whenever 
			$u\in\set{\ii,\j,\k}$.
			Therefore, substituting the vectors in \eqref{spanvect} yields
			\begin{equation}\label{innprodsv}
				\begin{aligned}
					  \inn{Rx-x}{\mathbf{r_0}}&=0,\\
					  \inn{Rx-x}{\mathbf{r_1}}&=-2\inn{q}{\ii\times x},\\ 
						\inn{Rx-x}{\mathbf{r_2}}&=-2\inn{q}{\j\times x},\\
						\inn{Rx-x}{\mathbf{r_3}}&=-2\inn{q}{\k\times x}.
				\end{aligned}
			\end{equation}

			From now on, let $x=(1/2)(0,1,1,1)$.  Keeping in mind that $Rx-x\in\G+R\G$ if and only if $\inn{Rx-x}{\mathbf{t}}\in\Z$ for all 
			$\mathbf{t}\in\G^{\ast}(R)$, we consider the following three possibilities:

			\noindent\texttt{Case I:} $\nrq$ is odd
			
				By Lemma~\ref{spanset}, $\mathbf{t}=a\mathbf{r_0}+b\mathbf{r_1}+c\mathbf{r_2}+d\mathbf{r_3}+(1/2)e(\mathbf{r_0}+\mathbf{r_1}
				+\mathbf{r_2}+\mathbf{r_3})$, for some $a,b,c,d,e\in\Z$.  It follows from~\eqref{innprodsv} that
				\[\inn{Rx-x}{\mathbf{t}}=-\inn{q}{(0,b,c,d)\times (0,1,1,1)}\in\Z\]
				for all $a,b,c,d,e\in\Z$.  Thus, by Theorem~\ref{OCxG}, $R_q\in \SOC(x+\G)$ whenever $\nrq$ is odd.
		
			\noindent\texttt{Case II:} $\nrq\equiv 2\imod{4}$

				Write $\mathbf{t}=a\mathbf{r_0}+(1/2)b(\mathbf{r_0}+\mathbf{r_1})+(1/2)c(\mathbf{r_0}+\mathbf{r_2})+(1/2)d(\mathbf{r_0}+\mathbf{r_3})$, 
				for some $a,b,c,d\in\Z$, and $q=r+2s$ for some $s\in \J$ and $r\in\set{(1,1,0,0),(1,0,1,0),(1,0,0,1)}$.
				Then
				\[
					\inn{Rx-x}{\mathbf{t}}=-\tfrac{1}{2}\inn{r}{(0,b,c,d)\times (0,1,1,1)}-
					\inn{s}{(0,b,c,d)\times (0,1,1,1)}\notin\Z
				\]
				for some values of $b,c,d\in\Z$.  This means that $R_q\notin \SOC(x+\G)$ if $\nrq\equiv 2\imod{4}$.
		
			\noindent\texttt{Case III:} $\nrq\equiv 0\imod{4}$

				One can express $\mathbf{t}$ as $\mathbf{t}=(1/2)a\mathbf{r_0}+(1/2)b\mathbf{r_1}+(1/2)c\mathbf{r_2}+(1/2)d\mathbf{r_3}$ for some 
				$a,b,c,d\in\Z$.  Write $q=r+2s$ where $s\in\L$ and $r=(1,1,1,1)$.  This yields
				\[\inn{Rx-x}{\mathbf{t}}=-\inn{s}{(0,b,c,d)\times(0,1,1,1)}\in\Z,\]
				for all $a,b,c,d\in\Z$.  Consequently, $R_q\in \SOC(x+\G)$ whenever $\nrq\equiv 0\imod{4}$.
		
			The following lemma summarizes the results for $\OC(x+\G)$.

			\begin{lemma}\label{OCshiftedfcc}
				Let $\G$ be the f.c.c.~lattice 
				\[
					\G=2\Im{\L}\cup[(1,1,0)+2\Im{\L}]
					\cup[(0,1,1)+2\Im{\L}]\cup[(1,0,1)+2\Im{\L}],
				\]
				and $x=(1/2)(1,1,1)$.  Then 
				$(S)\OC(x+\G)$ is a subgroup of $(S)\OC(\G)$ of index $2$ given by
				\begin{alignat*}{2}
					\SOC(x+\G)&=\{R_q\in \SOC(\G):\nrq\not\equiv 2\imod{4}\},\text{ and}\\
					\OC(x+\G)&=\SOC(x+\G)\cup\{T_q:\nrq\equiv 2\imod{4}\}.
				\end{alignat*}
				  If $f_{x+\G}(m)$, $\hat{f}_{x+\G}(m)$, and $\hat{F}_{x+\G}(m)$ denote the number of CSLs, coincidence rotations, and linear  
				  coincidence isometries of $x+\G$ of index $m$, respectively, then $f_{x+\G}(m)=f_{\Z^3}(m)$, $\hat{f}_{x+\G}(m)=12f_{x+\G}(m)$, and 
				  $\hat{F}_{x+\G}(m)=24f_{x+\G}(m)$.
			\end{lemma}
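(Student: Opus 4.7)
The plan is to consolidate the three case computations (Cases I, II, and III) immediately preceding the lemma---which between them already settle the characterisation $SOC(x+\G)=\{R_q\in SOC(\G):|q|^2\not\equiv 2\imod{4}\}$ by combining the inclusions from Cases I and III with the exclusion in Case II via Theorem~\ref{OCxG}---and then run the same kind of analysis for the coincidence reflections before handling the subgroup structure and the counting identities.

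For the reflection part, I would rewrite the criterion of Theorem~\ref{OCxG} for $T_q$. Using $T_q=R_q\circ(-\mathbbm{1}_3)$, we have $T_q\G=R_q\G$ and hence $\G+T_q\G=[\G^*(R_q)]^*$; moreover $T_qx-x=-(R_qx-x)-2x$. Thus $T_q\in OC(x+\G)$ iff $\langle R_qx-x,\mathbf{t}\rangle+\langle(1,1,1),\mathbf{t}\rangle\in\Z$ for every $\mathbf{t}\in\G^*(R_q)$. The additional term $\langle(1,1,1),\mathbf{t}\rangle$ is automatically integer on the four integer generators $\mathbf{r}_0,\ldots,\mathbf{r}_3$, so only the fractional generators matter. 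Rerunning the three splits: in Case~I the fractional generator $\tfrac{1}{2}(\mathbf{r}_0+\mathbf{r}_1+\mathbf{r}_2+\mathbf{r}_3)$ pairs with $(1,1,1)$ to give $\tfrac{1}{2}(3q_0+q_1+q_2+q_3)$, which is a half-integer whenever $|q|^2$ is odd (then exactly one or three of the $q_i$ are odd), so $T_q\notin OC(x+\G)$. In Case~III a similar parity obstruction on the $\tfrac{1}{2}\mathbf{r}_j$, using $q=(1,1,1,1)+2s$ with $s\in\L$, again excludes $T_q$. In Case~II, however, the extra contribution exactly compensates the half-integer obstruction produced by $r\in\{(1,1,0,0),(1,0,1,0),(1,0,0,1)\}$ in the original calculation, giving $T_q\in OC(x+\G)$. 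This Case~II analysis, which splits according to the three possible $r$'s and requires tracking parity of inner products against each of the three fractional spanning vectors, is the main computational obstacle.

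For the subgroup and index-$2$ assertions, consider the map $\phi\colon SOC(\G)\to\Z/2\Z$ sending $R_q$ to the $2$-adic valuation $v_2(|q|^2)\bmod 2$. This is well-defined since $R_q=R_{-q}$ and $|{-q}|^2=|q|^2$. Because the primitive representative of a product satisfies $|pq/g|^2=|p|^2|q|^2/g^2$, we obtain $v_2(|pq/g|^2)\equiv v_2(|p|^2)+v_2(|q|^2)\imod{2}$, so $\phi$ is a group homomorphism. A direct check shows that for a primitive Lipschitz $q$ the value $v_2(|q|^2)$ lies in $\{0,1,2\}$ depending on whether the number of odd components of $q$ is $1$ or $3$, equals $2$, or equals $4$, respectively; hence $v_2(|q|^2)$ is odd iff $|q|^2\equiv 2\imod{4}$. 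Thus $\ker\phi=SOC(x+\G)$, and since $R_{(1,1,0,0)}\in SOC(\G)\setminus SOC(x+\G)$ the index is exactly $2$. The analogous index-$2$ statement for $OC$ follows from the fact that the reflection coset of $OC(x+\G)$ in $OC(\G)$ corresponds bijectively via $R_q\mapsto T_q$ to the non-trivial coset of $SOC(x+\G)$ in $SOC(\G)$.

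The counting statements fall out. Since $R_{(1,1,0,0)}\in P(\G)$, left multiplication by this element preserves the coincidence index and interchanges the two cosets of $SOC(x+\G)$ in $SOC(\G)$, yielding $\hat{f}_{x+\G}(m)=\tfrac{1}{2}\hat{f}_{\Z^3}(m)=12f_{\Z^3}(m)$. By Proposition~\ref{countCSLshift}, two rotations in $SOC(x+\G)$ produce the same CSL iff they differ by an element of $P(\G)\cap SOC(x+\G)$; applying the same index-$2$ argument inside $P(\G)$ gives $|P(\G)\cap SOC(x+\G)|=12$ (matching an explicit enumeration: the identity, the three $180^\circ$ rotations around the coordinate axes and the eight $\pm 120^\circ$ rotations around the body diagonals), whence $\hat{f}_{x+\G}(m)=12f_{x+\G}(m)$ and consequently $f_{x+\G}(m)=f_{\Z^3}(m)$. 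Finally, $\hat{F}_{x+\G}(m)=\hat{f}_{x+\G}(m)+|\{T_q\in OC(x+\G):\S(R_q)=m\}|=12f_{\Z^3}(m)+12f_{\Z^3}(m)=24f_{x+\G}(m)$.
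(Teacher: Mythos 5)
Your proposal is correct, and for the two characterisations it is essentially the paper's own argument made explicit: the paper's proof simply points back to the Case I--III computations for $SOC(x+\G)$ and dismisses the reflections with ``similar calculations yield $OC(x+\G)$'', and your treatment of the reflections -- writing $T_qx-x=-(R_qx-x)-2x$, noting that the extra pairing $\inn{(1,1,1)}{\mathbf{t}}$ is integral on the integer generators, extracting the obstructions $\tfrac{1}{2}(3q_0+q_1+q_2+q_3)$ and $\tfrac{1}{2}(q_1+q_2+q_3)$ in Cases I and III, and the exact compensation in Case II -- is precisely that calculation (the compensation does check out: for $r=(1,1,0,0)$ one gets $\inn{R_qx-x}{\mathbf{t}}\equiv\tfrac{d-c}{2}$ and $\inn{(1,1,1)}{\mathbf{t}}\equiv\tfrac{c+d}{2}$ modulo $1$, summing to an integer, and similarly for the other residues of $q$ modulo $2\J$). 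Where you genuinely depart from the paper is the structural part: the paper proves that $(S)OC(x+\G)$ is a group by characterising membership through the parity of $v_2(\nrq)$ and then checking closure ``by going through all the possible cases'' before invoking Proposition~\ref{OCgroup}, and it never explicitly argues the index-$2$ claim; your observation that $R_q\mapsto v_2(\nrq)\bmod 2$ is a well-defined homomorphism $SOC(\G)\to\Z/2\Z$ (because the primitive representative of $pq$ has norm $\nrp\nrq/g^2$) packages closure, normality and the index-$2$ statement in one stroke, with $R_{(1,1,0,0)}$ witnessing surjectivity. Your counting is also reshuffled -- you first get $\hat{f}_{x+\G}(m)=12f_{\Z^3}(m)$ from the index-preserving coset bijection given by left multiplication by $R_{(1,1,0,0)}$ and then deduce $f_{x+\G}(m)=f_{\Z^3}(m)$, whereas the paper argues in the opposite order -- but both versions rest on Proposition~\ref{countCSLshift} together with the enumeration of the twelve point-group rotations with $\nrq\not\equiv 2\imod{4}$, which you state correctly. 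Only one step deserves tightening: to conclude that $OC(x+\G)$ itself is a subgroup (and not merely a set of the right cardinality), note that $-\mathbbm{1}_3$ is central, so $\phi$ extends to a homomorphism on $OC(\G)\cong SOC(\G)\times\set{\pm\mathbbm{1}_3}$ sending $T_q\mapsto\phi(R_q)+1$, whose kernel is exactly $SOC(x+\G)\cup\set{T_q:\nrq\equiv 2\imod{4}}$; your final bijection sentence yields the index but not, by itself, closure under products involving reflections.
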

			\begin{proof}
				The explicit expression for $\SOC(x+\G)$ was obtained from the computations preceding the lemma.  Similar calculations yield $\OC(x+\G)$.
				
				Now, $R\in \SOC(x+\G)$ if and only if $R$ is parametrized by a quaternion $q$ with $\nrq=2^m\alpha$, where $m$ is an even integer and $\alpha$ is odd.  
				Similarly, the coincidence reflection $T\in \OC(x+\G)$ if and only if $T$ is parametrized by a quaternion $q$ with $\nrq=2^n\beta$, where $n$ and $\beta$ 
				are odd integers.  With these two criteria, one concludes by going through all the possible cases that $(S)\OC(x+\G)$ is closed under composition.  Hence, 
				by Proposition~\ref{OCgroup}, $(S)\OC(x+\G)$ is a group.
				
				It follows then from Proposition~\ref{countCSLshift} that $f_{x+\G}(m)=f_{\Z^3}(m)$.  Furthermore, expressions for $\hat{f}_{x+\G}(m)$ and 
				$\hat{F}_{x+\G}(m)$ follow from the fact that there are twelve symmetry rotations $R_q$ with ${\nrq\not\equiv 2\imod{4}}$, and 12 rotoreflection symmetries 
				$T_q$ with $\nrq\equiv 2\imod{4}$, respectively.
			\end{proof}

		Finally, applying the same technique used in computing for $\SOC(x+\G)$ yields that neither $x$ nor $Rx$ are in $\G+R\G$ for all $R\in \OC(\G)$.  
		Theorem~\ref{genthm}, together with Lemma~\ref{OCshiftedfcc}, brings about the following solution of the coincidence problem for the diamond packing.

		\begin{thm}\label{OCdiamond}
			Let $\G$ be the f.c.c.~lattice 
			\[
				\G=2\Im\L\cup [(1,1,0)+2\Im\L]
				\cup[(0,1,1)+2\Im\L]\cup[(1,0,1)+2\Im\L],
			\]
			and $D_3^+$ be the diamond packing $D_3^+=\G\cup(x+\G)$, where $x=(1/2)(1,1,1)$.  Then the group of linear coincidence isometries of $D_3^+$ is $\OC(\G)$.  
			In particular, $R=R_q\in \SOC(\G)$ is a coincidence rotation of $D_3^+$ with
			\begin{enumerate}[\rm(i)]
				\item $D_3^+(R)=\G(R)$ and $\S_{D_3^+}(R)=2\S_{\G}(R)=\nrq$ if $\nrq\equiv 2\imod{4}$.
	
				\item $D_3^+(R)=\G(R)\cup[(x+\ell)+\G(R)]$, where $\ell\in(Rx-x+R\G)\cap\G$, and 
				\[\S_{D_3^+}(R)=\S_{\G}(R)=\begin{cases}
					\nrq, &\text{if }\nrq\text{ is odd}\\
					(1/4)\nrq, &\text{if }\nrq\equiv 0\imod{4}.
				\end{cases}\]			
			\end{enumerate}
			Also, $T=T_q\in \OC(\G)\setminus \SOC(\G)$ is a coincidence rotoreflection of $D_3^+$ with
			\begin{enumerate}[\rm(i)]
				\item $D_3^+(T)=\G(T)$ and 
				\[\S_{D_3^+}(T)=2\S_{\G}(T)=\begin{cases}
					2\nrq, &\text{if }\nrq\text{ is odd}\\
					(1/2)\nrq, &\text{if }\nrq\equiv 0\imod{4}.
				\end{cases}\]
	
				\item $D_3^+(T)=\G(T)\cup[(x+\ell)+\G(T)]$, where $\ell\in(Tx-x+T\G)\cap\G$, and $\S_{D_3^+}(T)=\S_{\G}(T)=(1/2)\nrq$ if $\nrq\equiv 2\imod{4}$.
			\end{enumerate}
	
			If $f_{D_3^+}(m)$ is the number of CSPs of $D_3^+$ of index $m$, then $f_{D_3^+}$ is multiplicative and for primes $p$ and $r\in\N$, 
			\[f_{D_3^+}(p^r)=\begin{cases}
			1, &\text{if }p^r=2\\
			0, &\text{if }p=2\text{ and }r>1\\
			(p+1)p^{r-1}, &\text{otherwise}.
			\end{cases}\]
			The Dirichlet series generating function for $f_{D_3^+}(m)$ reads
			\begin{alignat}{2}\label{dirdia}
				\Phi_{D_3^+}(s)&=\sum_{m=1}^{\infty}\frac{f_{D_3^+}(m)}{m^s}=(1+2^{-s})\cdot\Phi_{\Z^3}(s)=\frac{\zeta_{\J}(s/2)}{\zeta(2s)}\\\notag
				&=1+\tfrac{1}{2^s}+\tfrac{4}{3^s}+\tfrac{6}{5^s}+\tfrac{4}{6^s}+\tfrac{8}{7^s}+\tfrac{12}{9^s}+\tfrac{6}{10^s}
				+\tfrac{12}{11^s}+\tfrac{14}{13^s}+\tfrac{8}{14^s}+\tfrac{24}{15^s}+\tfrac{18}{17^s}+\cdots\,.
			\end{alignat}
			Finally, the number of CSPs of $D_3^+$ with index at most $N$ is asymptotically $9N^2/(2\pi^2)$.
		\end{thm} 
		These results reflect nicely the special shelling structure of $D_3^+$. Observe that the points of $\G$ lie on shells of radius $r^2\equiv 0\imod{4}$
		(where $r^2$ must not be of the form $4^n(8k+7)$, see~\cite{HW08,E85}) and on shells of radius $r^2\equiv 2\imod{4}$.  On the other hand, the points of $x+\G$
		lie on shells with $4r^2\equiv 3\imod{8}$, see~\cite{HW08,E85}.  Thus, no coincidence isometry of $D_3^+$ can map points of $\G$ onto points of $x+\G$ and vice versa, 
		which leads to $\S_{D_3^+}(R)\geq\S_{\G}(R)$ for all coincidence isometries $R$.  The case $\S_{D_3^+}(R)=\S_{\G}(R)$ corresponds to those $R$ for which there are coincidences in
		shells of both $\G$ and $x+\G$, whereas $\S_{D_3^+}(R)=2\S_{\G}(R)$ holds if there are only coincidences in shells containing points of $\G$.

		Note that $\S_{D_3^+}(R)$ and $\S_{\G}(R)$ are essentially given by the norm $\nrq$ of a Hurwitz quaternion, or, if we view them as vectors in $\R^4$, by the square of the length
		of a vector of the four-dimensional centered hypercubic lattice $D_4$.  We thus expect a connection to the shelling problem of $D_4$, or more precisely, to the root lattice $D_4$
		scaled by a factor of $1/2$, see~\cite{CS99}. Indeed, in~\eqref{dirdia}, $\zeta_{\J}(s)=(1/24)\sum_{0\ne q\in\J}(1/|q|^{4s})$ is the generating function for the number of 
		nonzero right ideals of $\J$, and likewise $24\zeta_{\J}(s/2)=\sum_{0\ne q\in\J}(1/|q|^{2s})=\sum_{n\in\N}(c_{\J}(n)/{n^s})$ is the generating function
		for the number $c_{\J}(n)$ of points of $\J$ with square length $\nrq=n$.  The additional factor $1/\zeta(2s)$ in~\eqref{dirdia} is due to the fact that we only count primitive
		quaternions.

	\section{Outlook}
	
		In this paper, the idea of linear coincidence isometries of lattices was extended to include affine isometries.  Moreover, the coincidence problem for shifted 
		lattices and for crystallographic point packings was formulated in a mathematical setting and was solved for some important examples.  Considering further lattices and crystal
		structures would be interesting.  For applications to quasicrystals, the ideas in this paper should be extended to the $\Z$-module case~\cite{BG13}.  In particular,
		techniques implemented and results obtained in Section~\ref{coincshiftsqlatt} on the coincidences of a shifted square lattice may be generalized to planar
		modules by identifying these modules with rings of cyclotomic integers.  Initial results in this direction can be found in~\cite{L10}.

		The set of affine coincidence isometries of a lattice and the set of linear coincidence isometries of a shifted lattice do not form a group in general.  An 
		investigation of their algebraic structure should prove worthwhile.  It has been shown in~\cite{L10} that both sets are groupoids if and only if they are 
		groups.  An example where the set of coincidence rotations of a shifted lattice fails to form a group is still lacking.  Such an example might be found in 
		three-dimensions, where $\OG(d)$ is not Abelian anymore.
		
	\section*{Acknowledgements}
		The authors would like to thank the anonymous referees for their helpful comments and suggestions.  
		M.J.C.~Loquias would like to thank the German Academic Exchange Service (DAAD) for financial support during his stay in Bielefeld.  
		He also acknowledges the Office of the Chancellor of the University of the Philippines Diliman, through the Office of the Vice Chancellor for Research and Development, 
		for funding support through the Ph.D.~Incentive Awards.  This work was supported by the German Research Council (DFG), within the CRC 701.
		
	\bibliographystyle{amsplain}
	\bibliography{iucr}
\end{document}